\theoremstyle{definition}
\newtheorem{theorem}{Theorem}[section]
\newtheorem{lemma}[theorem]{Lemma}
\newtheorem{proposition}[theorem]{Proposition}
\newtheorem{remark}[theorem]{Remark}
\DeclareMathOperator{\prox}{\mathbf{prox}}
\DeclareMathOperator{\Hilbert}{\mathcal{H}}
\DeclareMathOperator{\intr}{int}
\DeclareMathOperator{\dom}{dom}
\DeclareMathOperator{\range}{range}
\DeclareMathOperator*{\argmin}{argmin}
\title{Bregman Golden Ratio Algorithms\\ for Variational Inequalities}
\author{
        Matthew K. Tam\thanks{School of Mathematics \& Statistics,
                             The University of Melbourne,
                             Parkville VIC 3010, Australia.
                             Email:~\href{mailto:matthew.tam@unimelb.edu.au}
                                         {matthew.tam@unimelb.edu.au}}
    \and
    Daniel J. Uteda\thanks{School of Mathematics \& Statistics,
                             The University of Melbourne,
                             Parkville VIC 3010, Australia. Email:~\href{mailto:duteda@student.unimelb.edu.au}{duteda@student.unimelb.edu.au}}
}
\date{\today}
\begin{document}
\maketitle

\begin{abstract}
Variational inequalities provide a framework through which many optimisation problems can be solved, in particular, saddle-point problems. In this paper, we study modifications to the so-called Golden RAtio ALgorithm (GRAAL) for variational inequalities -- a method which uses a fully explicit adaptive step-size, and provides convergence results under local Lipschitz assumptions without requiring backtracking. We present and analyse two Bregman modifications to GRAAL: the first uses a fixed step-size and converges under global Lipschitz assumptions, and the second uses an adaptive step-size rule. Numerical performance of the former method is demonstrated on a bimatrix game arising in network communication, and of the latter on two problems, namely, power allocation in Gaussian communication channels and $N$-person Cournot completion games. In all of these applications, an appropriately chosen Bregman distance simplifies the projection steps computed as part of the algorithm.
\end{abstract}

\providecommand{\keywords}[1]
{
  \small	
  \noindent \textbf{Keywords.} #1
}
\keywords{variational inequality $\cdot$ saddle-point problems $\cdot$ Bregman distance $\cdot$ local Lipschitz $\cdot$ adaptive step-size}
\paragraph{MSC2020.} 47J20, 49J40, 65K15, 65Y20

\section{Introduction}\label{sec:intro}

Let $\mathcal{X},\mathcal{Y}$ be real, finite-dimensional Hilbert spaces. In this work, we consider \emph{saddle-point problems} of the form
\begin{equation}\label{eq:saddle}
    \min_{x\in\mathcal{X}}\max_{y\in\mathcal{Y}} \Phi(x,y) := \psi(x) + f(x,y) - \zeta(y)
\end{equation}
where
\begin{itemize}
    \item $f\colon\mathcal{X}\times\mathcal{Y}\to\mathbb{R}$ is convex-concave and continuously differentiable.\label{ass:f}
    \item $\psi\colon\mathcal{X}\to(-\infty,+\infty],\zeta\colon\mathcal{Y}\to(-\infty,+\infty]$ are proper,  lower semicontinuous (l.s.c.), and convex.
\end{itemize}
Since many non-smooth optimisation problems can be cast in the form of~\eqref{eq:saddle}, it is a useful and heavily studied tool in itself \cite{adolphs2019local, DURR2013540, akimoto2021saddle, lyashko2011low, nesterov2007dual, hamedani2018optimization}. However, rather than attempt to solve~\eqref{eq:saddle} in its current form, it is far more convenient, even within the afrorementioned references, to follow in the steps of Korpelevi\v{c} \cite{korpelevich1976extragradient} and Popov \cite{popov1980modification}, by casting it as the \emph{Variational Inequality (VI)}:
\begin{equation}\label{eq:VI}
    \text{Find $z^*\in\Hilbert$ such that~} \langle F(z^*),z-z^*\rangle + g(z) - g(z^*) \geq 0 \quad \forall z\in\Hilbert,
\end{equation}
where
\begin{equation}\label{eq:saddle to VI}
    z=(x,y)\in\Hilbert:=\mathcal{X}\oplus\mathcal{Y},\quad F(z) = (\nabla_x f(x,y), -\nabla_y f(x,y)), \quad g(z) = \psi(x) + \zeta(y),
\end{equation}
and the variable $z^*:=(x^*,y^*)$ shown in~\eqref{eq:VI} characterises the solution $(x^*,y^*)$ to~\eqref{eq:saddle}.

Many methods (see, for instance, \cite{Pang1982IterativeMF, marcotee1995projection, fukishima1992equivalent}) for solving~\eqref{eq:VI} require global Lipschitz continuity of the operator $F$. However, this assumption is often too strong to hold in practice. Even when $F$ is globally Lipschitz continuous, knowledge of its Lipschitz constant is usually required as input to the chosen algorithm and determining this constant is typically more difficult than solving the original problem. Moreover, even if $F$ is globally Lipschitz and its global Lipschitz constant is known, then, as the step-size is inversely related to the Lipschitz constant, a constant step-size rule can be too conservative. This is particularly unnecessary if the generated sequence lies entirely within a region where a \emph{local} Lipschitz constant is small (relative to the size of the global constant).

Therefore, it is beneficial to instead define a step-size sequence which attempts to approximate a local Lipschitz constant with respect to the point iterates. The standard approach then is to generate a step-size sequence via a \emph{backtracking} procedure (see \cite{malitsky2020forward, bellocruz2015variant, censor1998interior, malitsky2018proximal, iusem1997variant, malitsky2020projected} and references therein). While avoiding each of the shortcomings listed above, such methods can become expensive when considering the overall run-time of the algorithm, due to the arbitrarily large number of steps taken during the backtracking procedure within each iteration. An emerging alternative is that of \emph{adaptive step-sizes} \cite{malitsky2020golden, malitsky2020adaptive, alacoglu2020convergence}, which accomplish the same goals as backtracking methods without the need for backtracking, ie, the step-size update is fully explicit. In particular, the \emph{adaptive Golden RAtio ALgorithm (aGRAAL)} \cite{malitsky2020golden} (as stated in Algorithm~\ref{alg:aGRAAL}), named as such because of it's relationship with the \textit{Golden Ratio} $\varphi=\frac{1+\sqrt{5}}{2}$, solves~\eqref{eq:VI}, and is the method we focus on here. 

\begin{algorithm}[!htb]
\caption{The Adaptive Golden RAtio ALgorithm (aGRAAL) \cite{malitsky2020golden}.}\label{alg:aGRAAL}
    \SetKwInput{Init}{Initialisation}
    \KwIn{Initial points $z_0,\overline{z}_0\in\Hilbert$, initial step-size $\lambda_0>0$, $\lambda_{\max}\gg 0$, $\phi\in(1,\varphi]$.}
    \Init{Set $z_1=\overline{z}_0$, $\theta_0=1$, $\rho=\frac{1}{\phi}+\frac{1}{\phi^2}$.}
    \For{$k=1,2,\dots$}{
    Calculate the step-size:
    \begin{equation}
        \lambda_k = \min\left\{\rho\lambda_{k-1}, \frac{\phi\theta_{k-1}}{4\lambda_{k-1}}\frac{\|z_k - z_{k-1}\|^2}{\|F(z_k) - F(z_{k-1})\|^2}, \lambda_{\max}\right\}.
    \end{equation}\\
    Compute the next iterates:
    \begin{align}\\
    \overline{z}_k &= \frac{(\phi - 1)z_k + \overline{z}_{k-1}}{\phi}\\
    z_{k+1} &= \prox_{\lambda_k g}(\overline{z}_k-\lambda_k F(z_k))\label{eq:aGRAAL prox}
    \end{align}
    Update: $\theta_k = \frac{\lambda_k\phi}{\lambda_{k-1}}.$
}
\end{algorithm}

One other way to potentially improve methods is to replace the Euclidean distance in the proximal operator with a non-Euclidean family of distance-like functions called the \emph{Bregman distance}. Such methods, for solving~\eqref{eq:VI}, can be found in existing literature \cite{censor1998interior, nomirovskii2019convergence, vanhieu2022modified, gibali2018new, gibali2020fast, sym12122007, jolaoso2020weak, jolaso2021single, vanhieu2020two}. Interestingly, most of these methods require a Lipschitz assumption but don't require knowledge of the Lipschitz constant. However, these employ a backtracking procedure and/or a non-increasing step-size sequence, whereas the step-size of our new method is fully explicit and is allowed to increase slightly at each iteration. 

In this paper, we investigate Bregman modifications to Algorithm~\ref{alg:aGRAAL}. To this end, we begin by proposing the \textit{Bregman-Golden RAtio ALgorithm (B-GRAAL)}, a Bregman version of the fixed step-size \textit{Golden RAtio ALgorithm (GRAAL)}, and prove convergence of our new method in full. We then present an adaptive version of B-GRAAL, or similarly, a Bregman modification to Algorithm~\ref{alg:aGRAAL}, which we refer to as the \textit{Bregman-adaptive Golden RAtio ALgorithm (B-aGRAAL)}. Although we only provide a convergence analysis of B-aGRAAL in a restrictive setting, we observe it to work numerically outside this setting.

One advantage of our new method is the flexibility provided by the Bregman proximal operator. In the context of convex-concave games, for instance, this modified operator arises as the projection onto the probability simplex which has a simple closed form expression with respect to the \emph{Kullback--Leibler (KL) divergence} but not with respect to the standard Euclidean distance. In fact, the Euclidean projection requires an $O(n\log n)$ time algorithm in $n$-dimensions \cite{wang2012projection, chen2011projection, dai2022distributed}, whereas the KL projection only requires $O(n)$ time (see, for instance \cite[Section 5]{BECK2003167} and \cite[Section 4.4]{NIPS2015_f60bb6bb}). Another advantage of these modifications in the constrained optimisation case, is that it is sometimes possible to choose a Bregman distance whose domain is the constraint set so as to make the feasibility of the iterates implicit.

The remainder of this paper is structure as follows. In Section~\ref{sec:prelim}, we collect preliminary results for use in our analysis. In Section~\ref{sec:bgraal}, we present our fixed step-size method and a proof of convergence. In Section~\ref{sec:bagraal}, we present our adaptive method with some partial analysis. Section~\ref{sec:exp} contains experimental results. Firstly, we compare the fixed step-size method with the Euclidean distance and the KL divergence on a matrix game between two players. Secondly, we make the same comparison for the adaptive method on power allocation problem in a Gaussian communication channel. Finally, we apply the adaptive method to an $N$-person Cournot oligopoly model with appropriately chosen Bregman distances over a closed box. We then conclude this paper by presenting some directions for further research.

\section{Preliminaries}\label{sec:prelim}
Throughout this work, $\Hilbert$ denotes a real, finite-dimensional Hilbert space with inner-product $\langle\cdot,\cdot\rangle$ and induced norm $\|\cdot\|$. Given an extended real-valued function $f\colon\Hilbert\to(-\infty,+\infty]$, its \emph{domain} is denoted $\dom f := \{x\in\Hilbert\colon f(x)<+\infty\}$. Its \emph{subdifferential} at $x\in\dom f$ is given by
$$\partial f(x) := \{\nu\in\Hilbert\colon f(x) - f(y) - \langle\nu,x-y\rangle\leq 0\quad\forall y\in\Hilbert\},$$
and defined as $\partial f(x):=\emptyset$ for $x\not\in\dom f$. The \textit{indicator function} of a set $K\subseteq\Hilbert$ is written $\iota_K$ and takes the value $0$ for $x\in K$ and $+\infty$ otherwise.

A proper, l.s.c., convex function $h\colon\Hilbert\to(-\infty,+\infty]$ is called \textit{Legendre}~\cite[Definition~7.1.1]{borwein2010convex} if it is strictly convex on every convex subset of $\dom\partial h:=\{x\in\Hilbert\colon\partial h(x)\neq\emptyset\}$ and differentiable on $\intr\dom h\neq\emptyset$ such that $\|\nabla h(x)\|\to\infty$ whenever $x$ approaches the boundary of $\dom h$. The \textit{convex conjugate} of $h$ written as $h^*\colon\Hilbert\to(-\infty,+\infty]$ is the given by
$$h^*(x^*) := \sup_{x\in\Hilbert}\{\langle x^*,x\rangle - h(x)\}.$$
When $h$ is merely differentiable on $\intr\dom h$, the \emph{Bregman distance} generated by $h$ is the function $D_h\colon \Hilbert\times\intr\dom h\to(-\infty,+\infty]$ given by
$$D_h(x,y) := h(x) - h(y) - \langle\nabla h(y),x-y\rangle.$$
When $h$ is also convex, $D_h$ is non-negative and, when $h$ is $\sigma$-strongly convex, $D_h$ satisfies $$ D_h(x,y)\geq\frac{\sigma}{2}\|x-y\|^2\quad \forall (x,y)\in \Hilbert\times\intr\dom h.$$
We begin by collecting some general properties of the Bregman distance.
\begin{proposition}[Properties of the Bregman distance]\label{prop:prop}
Let $h\colon\Hilbert\to(-\infty,+\infty]$ be Legendre. Then the following assertions hold.
\begin{enumerate}[(a)]
    \item\label{item:3} \emph{(three point identity)} 
    For all $x,y\in\intr\dom h$ and $z\in\dom h$, we have
    $$D_h(z,x) - D_h(z,y) - D_h(y,x) = \langle\nabla h(x) - \nabla h(y), y-z\rangle.$$
    \item\label{item:dual} For all $x,y\in\intr\dom h$, we have
    $$D_h(x,y) = D_{h^*}(\nabla h(y), \nabla h(x))$$
    \item\label{item:identity} 
    For all $x\in\dom h$ and  $y,u,v\in\intr\dom h$ such that $\nabla h(y) = \alpha\nabla h(u) + (1-\alpha)\nabla h(v)$ for some $\alpha\in\mathbb{R}$, we have
    $$D_h(x,y) = \alpha\Big[D_h(x,u) - D_h(y,u)\Big] + (1-\alpha)\Big[D_h(x,v) - D_h(y,v)\Big].$$
\end{enumerate}
\end{proposition}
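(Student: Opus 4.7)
The plan is to prove each of the three identities by direct expansion of the defining formula $D_h(a,b) = h(a) - h(b) - \langle \nabla h(b), a-b\rangle$, appealing to the standard duality properties of Legendre functions only where strictly necessary (namely in part (\ref{item:dual})).

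For part (\ref{item:3}), I would simply write out $D_h(z,x)$, $D_h(z,y)$, and $D_h(y,x)$ using the definition. The terms $h(z)$ cancel between the first two, and the $h(y)$ and $h(x)$ terms all cancel, leaving only inner products. Grouping the remaining inner products by their gradient factor gives $\langle \nabla h(x), y-z\rangle + \langle \nabla h(y), z-y\rangle$, which is exactly $\langle \nabla h(x) - \nabla h(y), y-z\rangle$.

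For part (\ref{item:identity}), which I would do next since it is again just arithmetic, I would first simplify each bracket separately. The key observation is that $D_h(x,u) - D_h(y,u) = h(x) - h(y) - \langle \nabla h(u), x-y\rangle$ (the $h(u)$ terms cancel and the inner products combine), with the analogous formula for $u$ replaced by $v$. Plugging these in, the $h(x) - h(y)$ terms pool together with coefficient $\alpha + (1-\alpha) = 1$, while the inner product terms combine into $\langle \alpha \nabla h(u) + (1-\alpha)\nabla h(v), x-y\rangle = \langle \nabla h(y), x-y\rangle$ by the hypothesis. This yields $D_h(x,y)$.

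Part (\ref{item:dual}) is the step that requires actual input from the theory of Legendre functions rather than mere algebra, and is the one I would expect to be mildly technical. The relevant facts, which I would cite from a standard reference such as Bauschke--Borwein--Combettes or Rockafellar, are that when $h$ is Legendre, $h^*$ is also Legendre with $\nabla h^* = (\nabla h)^{-1}$ as a bijection between $\intr \dom h$ and $\intr \dom h^*$, and the Fenchel--Young equality $h(y) + h^*(\nabla h(y)) = \langle \nabla h(y), y\rangle$ holds for all $y \in \intr \dom h$. Using these, I would expand
\[
D_{h^*}(\nabla h(y), \nabla h(x)) = h^*(\nabla h(y)) - h^*(\nabla h(x)) - \langle \nabla h^*(\nabla h(x)), \nabla h(y) - \nabla h(x)\rangle,
\]
substitute $h^*(\nabla h(y)) = \langle \nabla h(y),y\rangle - h(y)$, $h^*(\nabla h(x)) = \langle \nabla h(x),x\rangle - h(x)$, and $\nabla h^*(\nabla h(x)) = x$. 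After cancellation of the $\langle x, \nabla h(x)\rangle$ terms, the remainder collapses to $h(x) - h(y) - \langle \nabla h(y), x-y\rangle = D_h(x,y)$.

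Overall, none of the three parts requires substantial work beyond bookkeeping; the only step where I could foresee a subtle issue is verifying that $\nabla h^*(\nabla h(x)) = x$ in part (\ref{item:dual}), which depends essentially on the Legendre hypothesis and the fact that $x \in \intr \dom h$ (guaranteed by $x \in \intr \dom h$ in the statement).
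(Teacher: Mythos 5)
Your proposal is correct and matches the paper's treatment: part (\ref{item:identity}) is the same direct expansion using $\nabla h(y)=\alpha\nabla h(u)+(1-\alpha)\nabla h(v)$ (just run in the opposite direction), and for parts (\ref{item:3}) and (\ref{item:dual}) the paper merely cites references, while you supply the standard verifications they contain — the cancellation bookkeeping for the three-point identity and the Fenchel–Young equality together with $\nabla h^* = (\nabla h)^{-1}$ on $\intr\dom h$ for the duality formula. All three computations check out.
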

\begin{proof}
\ref{item:3}~See, for instance, \cite[Lemma 2.2]{teboulle2018simplified} and the paragraph immediately after.
\ref{item:dual}~See, for instance, \cite[Theorem 3.7(v)]{bauschke1997legendre}.
\ref{item:identity}~By using the definition of $D_h$, together with the assumption $\nabla h(y) = \alpha\nabla h(u) + (1-\alpha)\nabla h(v)$, we obtain
    \begin{align*}
        D_h(x,y) &= h(x) - h(y) - \langle \nabla h(y),x-y\rangle\\
        &= \alpha\Big[h(x) - h(y) - \langle\nabla h(u),x-y\rangle\Big] + (1-\alpha)\Big[h(x) - h(y) - \langle\nabla h(v),x-y\rangle\Big]\\
        &= \alpha\Big[h(x) - h(u) - \langle\nabla h(u), x-u\rangle - h(y) + h(u) - \langle\nabla h(u),u-y\rangle\Big]\\
         & \qquad + (1-\alpha)\Big[h(x) - h(v) - \langle\nabla h(v), x-v\rangle - h(y) + h(v) - \langle\nabla h(v),v-y\rangle\Big]  \\
        &= \alpha\Big[D_h(x,u) - D_h(y,u)\Big] + (1-\alpha)\Big[D_h(x,v) - D_h(y,v)\Big].
    \end{align*}
This completes the proof.
\end{proof}
\begin{remark}
When $h=\frac{1}{2}\|\cdot\|^2$, Proposition~\ref{prop:prop}\ref{item:identity} recovers the established Euclidean identity
$$\forall x,y\in\Hilbert, \alpha\in\mathbb{R} \qquad \|\alpha x + (1-\alpha)y\|^2 = \alpha\|x\|^2 + (1-\alpha)\|y\|^2 - \alpha(1-\alpha)\|x-y\|^2.$$
\end{remark}
We now turn our attention to operators defined in terms of the Bregman divergence. The \emph{(left) Bregman proximal operator} of a function $f\colon\Hilbert\to(-\infty,+\infty]$ is the (potentially set-valued) operator given by 
\begin{equation}\label{eq:breg prox def}
\prox_f^h(y) := \argmin_{x\in\Hilbert}\left\{f(x) + D_h(x,y)\right\}\quad\forall y\in\intr\dom h.
\end{equation}
Since we will only require the left Bregman proximal operator in this work, will omit the qualifier ``left'' from here on-wards. For further details on the analogous ``right Bregman proximal operator'', the reader is referred to \cite{borwein2011characterization}. The \emph{(left) Bregman projection} onto $C$ is the (left) Bregman proximal operator of $\iota_C$, that is,
$$P^h_C(y) := \prox_{\iota_C}^h(y) = \argmin_{x\in C} D_h(x,y)\quad\forall y\in\intr\dom h.$$

Next, we collect properties of the Bregman proximal operator for use in our subsequence algorithm analysis.

\begin{proposition}[Bregman proximal operator]\label{prop:prox}
Let $f\colon\Hilbert\to(-\infty,+\infty]$ be proper, l.s.c, convex and let $h\colon\Hilbert\to(-\infty,+\infty]$ be Legendre such that $\intr\dom h\cap\dom f\neq\emptyset$.
\begin{enumerate}[(a)]
    \item\label{item:range}  $\range(\prox^h_f)\subseteq\intr\dom h\cap\dom f$.
    \item\label{item:sv} $\prox^h_f$ is single-valued on $\dom (\prox^h_f)\subseteq\intr\dom h$. Moreover, if $h+f$ is supercoercive, then $\dom (\prox^h_f)=\intr\dom h$.
    \item\label{item:prox thm} Let $y\in\dom(\prox^h_f)$ and $x=\prox_f^h(y)$. Then, for all $u\in\Hilbert$, we have
    \begin{equation}\label{eq:prox thm}
    f(x) - f(u) \leq \langle\nabla h(y) - \nabla h(x),x-u\rangle. 
    \end{equation}
    \item\label{item:bfne} 
    Let $y,y'\in\dom(\prox^h_f)$, $x=\prox_f^h(y)$ and $x'=\prox_f^h(y')$. Then
    \begin{equation}\label{eq:bfne}
    0\leq\langle \nabla h(x) - \nabla h(x^\prime), x-x^\prime\rangle \leq \langle \nabla h(y) - \nabla h(y^\prime), x-x^\prime\rangle.
    \end{equation}
\end{enumerate}
\end{proposition}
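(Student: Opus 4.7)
The plan is to handle the four assertions in order, drawing on first-order optimality conditions for the Bregman proximal subproblem together with the essential smoothness enjoyed by the Legendre function $h$. For (a), any $x$ attaining the minimum in~\eqref{eq:breg prox def} must satisfy $f(x)+D_h(x,y)<\infty$, which immediately forces $x\in\dom f\cap\dom h$. To upgrade $\dom h$ to $\intr\dom h$, I would apply Fermat's rule: since $\intr\dom h\cap\dom f\neq\emptyset$ provides a constraint qualification, the sum rule yields $\nabla h(y)-\nabla h(x)\in\partial f(x)$ at the optimum, which requires $\nabla h(x)$ to exist. Because $h$ is Legendre, $\|\nabla h(x_n)\|\to\infty$ whenever $x_n$ approaches the boundary of $\dom h$, ruling out boundary minimisers and giving $x\in\intr\dom h$.

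For (b), strict convexity of $h$ on $\intr\dom h$ (a defining property of Legendre functions) makes $f(\cdot)+D_h(\cdot,y)$ strictly convex on its effective domain, which by (a) lies in $\intr\dom h$; uniqueness follows. For existence under supercoercivity of $h+f$, I would rewrite the objective as $(f+h)(x)-h(y)-\langle\nabla h(y),x-y\rangle$, observe that the supercoercive $(f+h)(x)$ dominates the linear term as $\|x\|\to\infty$, and conclude by Weierstrass that a minimiser exists for every $y\in\intr\dom h$. Part (c) then follows immediately by combining the optimality relation $\nabla h(y)-\nabla h(x)\in\partial f(x)$ from (a) with the subgradient inequality evaluated at an arbitrary $u\in\Hilbert$.

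For (d), I would invoke (c) twice: once with input $y$ and test point $u=x'$ giving $f(x)-f(x')\leq\langle\nabla h(y)-\nabla h(x),x-x'\rangle$, and once with input $y'$ and test point $u=x$ giving $f(x')-f(x)\leq\langle\nabla h(y')-\nabla h(x'),x'-x\rangle$. Adding these two inequalities cancels the $f$-values and rearranges to the right-hand inequality in~\eqref{eq:bfne}. The left-hand inequality $0\leq\langle\nabla h(x)-\nabla h(x'),x-x'\rangle$ is simply monotonicity of $\nabla h$, a direct consequence of convexity of $h$. I expect the only delicate point to be in~(a), where the essential smoothness property of Legendre functions is what rules out boundary minimisers and thereby guarantees that the gradient expressions appearing throughout the rest of the proposition are well-defined.
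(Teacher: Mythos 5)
Your proposal is correct and, for parts (c) and (d), follows exactly the paper's route: the first-order optimality condition $\nabla h(y)-\nabla h(x)\in\partial f(x)$ gives (c), and (d) is obtained by applying (c) twice with $u=x'$ and $u=x$, adding, and using monotonicity of $\nabla h$ for the left-hand inequality. Where you differ is in (a) and (b): the paper outsources these to \cite[Propositions~3.21--3.23]{bauschke2003bregman} (noting only that the constraint qualification $\intr\dom h\cap\dom f\neq\emptyset$ validates the sum rule), whereas you give self-contained arguments --- Fermat's rule plus essential smoothness for (a), and strict convexity plus a coercivity/Weierstrass argument for (b). Your version is more elementary and makes the mechanism visible; the paper's is shorter and leans on a general theory of Bregman monotone operators. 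One small imprecision in your (a): the inclusion $\nabla h(y)-\nabla h(x)\in\partial f(x)$ as written already presupposes $x\in\intr\dom h$, so it cannot itself be the tool that excludes boundary points. The clean way to close this is to note that the sum rule gives $\nabla h(y)\in\partial f(x)+\partial h(x)$, and that essential smoothness of a Legendre function forces $\partial h(x)=\emptyset$ for every $x\in\dom h\setminus\intr\dom h$ (this is the standard consequence of the gradient blow-up condition you cite); hence any minimiser with a nonempty subdifferential of $f+h$ must lie in $\intr\dom h$, after which $\partial h(x)=\{\nabla h(x)\}$ and your displayed inclusion follows. With that adjustment the argument is complete.
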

\begin{proof}
\ref{item:range}~See \cite[Proposition~3.23(v)(b)]{bauschke2003bregman}, noting that the sum rule holds for $f$ and $h$ since $\intr\dom h\cap\dom f\neq\emptyset$ and thus $\dom \partial\left(f+h\right) = \dom\left(\partial f + \nabla h\right) \subseteq \dom f\cap\intr\dom h$.
\ref{item:sv}~The first part follows by combining (a) and \cite[Proposition 3.22 (ii)(d)]{bauschke2003bregman}. For the second part, see \cite[Proposition~3.21(vii)]{bauschke2003bregman}.
\ref{item:prox thm}~The first order optimality condition together with (a) implies $\nabla h(y)-\nabla h (x)\in\partial f(x)$ which establishes the inequality. \ref{item:bfne}~The first inequality in \eqref{eq:bfne} follows from convexity of $h$. To show the second, we apply~\eqref{eq:prox thm} with $u=x^\prime$ to see
$$f(x)-f(x^\prime)\leq\langle\nabla h(y)-\nabla h(x),x-x^\prime\rangle,$$
and similarly,
$$f(x^\prime) - f(x)\leq\langle\nabla h(y^\prime) - \nabla h(x^\prime),x^\prime-x\rangle.$$
Then adding these inequalities gives the desired result.
\end{proof}
\begin{remark}
Parts \ref{item:range}, \ref{item:sv}, \ref{item:bfne} of Proposition~\ref{prop:prox} also apply to the \emph{Bregman resolvent} \cite{borwein2011characterization}, which is defined for a set-valued operator $A\colon\Hilbert\rightrightarrows\Hilbert$ as $R^h_{A} = \left(\nabla h + A\right)^{-1}\circ\nabla h$. To see that the resolvent generalises the proximal operator, we refer to \cite[Proposition 3.22 (ii)(a)]{bauschke2003bregman}.
\end{remark}

\section{The Bregman-Golden Ratio Algorithm}\label{sec:bgraal}

In this section, we consider the \textit{Variational Inequality (VI)} problem:
\begin{equation}\label{eq:VI2}
    \text{Find~}z^*\in\Hilbert\text{~such that~}\langle F(z^*), z-z^*\rangle + g(z) - g(z^*) \geq 0 \quad \forall z\in\Hilbert,
\end{equation}
where we assume that 
\begin{enumerate}[\textbf{A.\arabic*}]
    \item \label{ass:g} $g\colon\Hilbert\to(-\infty,+\infty]$ is proper, l.s.c., convex.
    \item\label{ass:h} $h\colon\Hilbert\to(-\infty,+\infty]$ is continuously differentiable, Legendre, and $\sigma$-strongly convex. In addition, we will also require that $D_h(x,x_n)\to 0$ for every sequence $(x_n)\subseteq\intr\dom h$ that converges to some $x\in\dom h$.
    \item \label{ass:F} $F\colon\Hilbert\to\Hilbert$ is monotone over $\dom g\cap\intr\dom h\neq\emptyset$.    
    \item \label{ass:S} $\Omega:=S\cap\dom h \neq \emptyset$ where $S$ denotes the solution set of~\eqref{eq:VI2}.
\end{enumerate}

\begin{remark}
Assumption~\ref{ass:h} is common in the literature concerning Bregman first-order methods~\cite{teboulle2018simplified, Chen1993ConvergenceAO, bauschke2017descent, bauschke2019linear}. In particular, the limit condition holds when $\nabla h$ is continuous and $\dom h$ is open. Indeed, in this case, $\sigma$-strong convexity of $h$ implies that $h^*$ is $\frac{1}{\sigma}$-smooth \cite[Theorem 6]{kakade2009duality}, and so applying Proposition~\ref{prop:prop}\ref{item:dual} gives
$$D_h(x,x_n)=D_{h^*}\left(\nabla h(x_n), \nabla h(x)\right) \leq \frac{1}{2\sigma}\|\nabla h(x_n) - \nabla h(x)\|^2 \to 0.$$
The significance of $\dom h$ being open here is that $h$ is differentiable at $x\in\dom h$, however we observe that the same condition can still hold if $\dom h$ is closed. In particular, it also holds for the KL divergence (see, for instance \cite[Example 2.1]{Chen1993ConvergenceAO}).
\end{remark}

Our proposed algorithm for \eqref{eq:VI2}, the \emph{Bregman-Golden RAtio Algorithm (B-GRAAL)}, is stated in Algorithm~\ref{alg:B-GRAAL}. Once again, $\varphi:=\frac{1+\sqrt{5}}{2}$ denotes the \emph{Golden Ratio}, which satisfies $\varphi^2=\varphi+1$.

\begin{algorithm}[!htb]
\caption{The Bregman-Golden RAtio ALgorithm (B-GRAAL)}\label{alg:B-GRAAL}
    \KwIn{Initial points $z_1,\overline{z}_0\in\intr\dom h$ and a step-size $\lambda \in \left(0,\frac{\sigma\varphi}{2L}\right]$.}
    \For{$k=1,2,\dots$}{
    Compute the next iterates:
    \begin{align}
        \overline{z}_k &= (\nabla h)^{-1}\left(\frac{(\varphi-1)\nabla h(z_k) + \nabla h(\overline{z}_{k-1})}{\varphi}\right) \label{eq:breg-h}\\
        z_{k+1} &=\argmin_{z\in\Hilbert}\left\{\langle F(z_k), z-z_k\rangle + g(z) + \frac{1}{\lambda} D_h(z,\overline{z}_k)\right\}.\label{eq:breg-prox}
    \end{align}
        }
\end{algorithm}

The following lemma establishes the well-definedness of the sequences generated by the Bregman-GRAAL.
\begin{lemma}\label{lem:contained}
Suppose Assumptions~\ref{ass:g}-\ref{ass:h} hold. Then the sequences $(\overline{z}_k)$ and $(z_k)$ generated by Algorithm~\ref{alg:B-GRAAL} are well-defined. Moreover, $(\overline{z}_k)\subseteq\intr\dom h$ and  $(z_k)\subseteq\intr\dom h\cap \dom g$.
\end{lemma}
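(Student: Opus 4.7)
My plan is to prove both claims simultaneously by induction on $k$, with the inductive hypothesis being $z_k \in \intr\dom h$ and $\overline{z}_{k-1} \in \intr\dom h$ (the base case is immediate from the initialisation $z_1,\overline{z}_0\in\intr\dom h$, and membership $z_1\in\dom g$ needs to be added as a mild initialisation assumption for the $\dom g$ claim at $k=1$). The inductive step splits naturally into two pieces: first show $\overline{z}_k\in\intr\dom h$ is produced by \eqref{eq:breg-h}, and then use this to show the Bregman proximal subproblem \eqref{eq:breg-prox} is uniquely solved by a point in $\intr\dom h\cap\dom g$.

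For the first piece, I would invoke the key property of Legendre functions (see \cite[Theorem~5.10]{bauschke1997legendre} or \cite{borwein2010convex}) that $\nabla h$ is a bijection from $\intr\dom h$ onto $\intr\dom h^*$, with inverse $\nabla h^*$. Thus $\nabla h(z_k)$ and $\nabla h(\overline{z}_{k-1})$ both lie in $\intr\dom h^*$, which is convex. Since the coefficients $\tfrac{\varphi-1}{\varphi}$ and $\tfrac{1}{\varphi}$ are positive and sum to $1$, the convex combination appearing in \eqref{eq:breg-h} stays inside $\intr\dom h^*$, and its image under $(\nabla h)^{-1}=\nabla h^*$ is therefore a well-defined point of $\intr\dom h$, namely $\overline{z}_k$.

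For the second piece, I would rewrite \eqref{eq:breg-prox} as the Bregman proximal operator
\[
z_{k+1}=\prox^{h}_{\lambda \tilde g_k}(\overline{z}_k),\qquad \tilde g_k(z):=g(z)+\langle F(z_k),z\rangle,
\]
so the additive constant in $z$ is harmless. Since $g$ is proper, l.s.c., convex, so is $\tilde g_k$, and $\intr\dom h\cap\dom \tilde g_k=\intr\dom h\cap\dom g\neq\emptyset$ by Assumption~\ref{ass:F}, so Proposition~\ref{prop:prox} applies. The $\sigma$-strong convexity from Assumption~\ref{ass:h} makes $h$ supercoercive, and $\tilde g_k$ is bounded below by an affine function, hence $h+\lambda \tilde g_k$ is supercoercive. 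Proposition~\ref{prop:prox}\ref{item:sv} then gives $\dom(\prox^{h}_{\lambda \tilde g_k})=\intr\dom h$, so the prox is single-valued at $\overline{z}_k$; and Proposition~\ref{prop:prox}\ref{item:range} yields $z_{k+1}\in\intr\dom h\cap\dom g$, closing the induction.

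The only real subtlety, and the step I expect to be the main obstacle, is the first piece: one must be careful to argue that the convex combination is taken in the \emph{dual} (where the range of $\nabla h$ is a convex set), not in the primal, since $\intr\dom h$ itself need not be invariant under the map $(z,\overline{z})\mapsto (\nabla h)^{-1}\!\bigl(\tfrac{(\varphi-1)\nabla h(z)+\nabla h(\overline{z})}{\varphi}\bigr)$ through any primal convexity argument. Everything else reduces to applying Proposition~\ref{prop:prox} together with supercoercivity, which is routine.
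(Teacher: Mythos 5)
Your proof is correct and follows essentially the same route as the paper's: induction on $k$, the bijectivity of $\nabla h\colon\intr\dom h\to\intr(\dom h^*)$ together with convexity of $\intr(\dom h^*)$ to handle \eqref{eq:breg-h}, and Proposition~\ref{prop:prox}\ref{item:range}--\ref{item:sv} plus supercoercivity to handle \eqref{eq:breg-prox}. The only cosmetic difference is that the paper gets supercoercivity of $h+\lambda f$ directly from its $\sigma$-strong convexity via \cite[Corollary~11.16]{bauschke2011convex}, whereas you combine supercoercivity of $h$ with an affine minorant of $\tilde g_k$; your remark that $z_1\in\dom g$ requires a mild extra initialisation assumption is a fair observation that the paper's proof (which only establishes the claim for $z_{k+1}$, $k\geq 1$) quietly glosses over.
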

\begin{proof}
Suppose by way of induction that $z_{k},\overline{z}_{k-1}\in\intr\dom h$ for some $k\geq 1$.
Since the gradient $\nabla h\colon\intr\dom h\to\intr(\dom h^*)$ is a bijection~\cite[Theorem~7.3.7]{borwein2010convex}, it follows that $\nabla h(z_k),\nabla h(\overline{z}_{k-1})\in\intr(\dom h^*)$. As $\intr(\dom h^*)$ is a convex set, \eqref{eq:breg-h} implies that  $\nabla h(\overline{z}_k) \in \intr(\dom h^*)$ which establishes that $\overline{z}_k\in\intr\dom h$.
Next, we observe that $z_{k+1} = \prox_{\lambda f}^h(\overline{z}_k)$, where  $f(z):=\langle F(z_k), z-z_k\rangle + g(z)$. Note that $\dom f=\dom g$. Since $\lambda f+h$ is $\sigma$-strongly convex, it is supercoercive by \cite[Corollary~11.16]{bauschke2011convex} and so Proposition~\ref{prop:prox}\ref{item:range}-\ref{item:sv} shows that $\prox^h_{\lambda f}$ is single-valued with $\range(\prox^h_{\lambda f})\subseteq\intr\dom h\cap\dom g$ and therefore $z_{k+1}\in\intr\dom h\cap\dom g$.
\end{proof}

\begin{remark}
The Bregman proximal step shown in~\eqref{eq:breg-prox} can be expressed in terms of the Bregman proximal operator: $z_{k+1}=\prox_{\lambda f}^h(\overline{z}_k)$, where $f(z) = \langle F(z_k), z-z_k\rangle + g(z)$. Equivalenty, $z_{k+1} = \prox_{\lambda g}^h(\left(\nabla h\right)^{-1}(\nabla h(\overline{z}_k) - \lambda F(z_k)))$, due to the first-order optimality condition in Proposition~\ref{prop:prox}\ref{item:prox thm}.
\end{remark}

The following lemma is key in our convergence analysis.
\begin{lemma}\label{lem:dec}
Suppose Assumptions~\ref{ass:g}-\ref{ass:S} hold and that $F$ is $L$-Lipschitz continuous on $\dom g\cap\intr\dom h$. Let $z^*\in \Omega$ be arbitrary. Then the sequences $(z_k), (\overline{z}_k)$ generated by Algorithm~\ref{alg:B-GRAAL} satisfy
\begin{multline}\label{eq:energy}
0 \leq (\varphi+1)D_h(z^*, \overline{z}_{k+1}) + \frac{\varphi}{2}D_h(z_{k+1}, z_k) - \varphi D_h(z_{k+1}, \overline{z}_{k+1}) \\
\leq (\varphi+1)D_h(z^*, \overline{z}_k) + \frac{\varphi}{2}D_h(z_k, z_{k-1}) - \varphi D_h(z_k, \overline{z}_k) - \left(1-\frac{1}{\varphi}\right)D_h(z_{k+1}, \overline{z}_k).
\end{multline}
\end{lemma}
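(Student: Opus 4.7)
The plan is to derive the claimed descent inequality for the Lyapunov quantity $E_k := (\varphi+1)D_h(z^*, \overline{z}_k) + \frac{\varphi}{2}D_h(z_k, z_{k-1}) - \varphi D_h(z_k, \overline{z}_k)$ by combining the first-order optimality of the Bregman prox step at two consecutive iterations, monotonicity of $F$ together with the VI, Lipschitz continuity of $F$ paired with strong convexity of $h$, and repeated use of the identity in Proposition~\ref{prop:prop}\ref{item:identity}. Throughout, the algebra collapses cleanly thanks to the golden-ratio relations $\varphi^2 = \varphi + 1$ and $\varphi - 1 = 1/\varphi$.

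First, combining Proposition~\ref{prop:prox}\ref{item:prox thm} with the three-point identity Proposition~\ref{prop:prop}\ref{item:3} yields the ``prox inequality''
\[
\lambda\langle F(z_k), z_{k+1} - u\rangle + \lambda g(z_{k+1}) - \lambda g(u) \leq D_h(u,\overline{z}_k) - D_h(u, z_{k+1}) - D_h(z_{k+1}, \overline{z}_k)
\]
valid for all $u \in \Hilbert$. I would evaluate this at step $k$ with $u = z^*$ and at step $k-1$ with $u = z_{k+1}$ and add the two. After rearrangement to expose $\langle F(z_k) - F(z_{k-1}), z_{k+1} - z_k\rangle$ on the left, the VI evaluated at $z_k$ and monotonicity of $F$ let me discard the non-negative quantity $A_k := \lambda\langle F(z_k), z_k - z^*\rangle + \lambda g(z_k) - \lambda g(z^*) \geq 0$. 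The remaining inner product is controlled via Lipschitz continuity of $F$, Cauchy--Schwarz, Young's inequality, and $\|a-b\|^2 \leq \frac{2}{\sigma}D_h(a,b)$; combined with $\lambda \leq \frac{\sigma\varphi}{2L}$, this absorbs $-\lambda\langle F(z_k) - F(z_{k-1}), z_{k+1} - z_k\rangle$ into $\frac{\varphi}{2}[D_h(z_k, z_{k-1}) + D_h(z_{k+1}, z_k)]$.

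The surviving inequality still contains a ``legacy'' combination $D_h(z_{k+1}, \overline{z}_{k-1}) - D_h(z_k, \overline{z}_{k-1})$, which I would eliminate via the three-point identity together with $\nabla h(\overline{z}_{k-1}) - \nabla h(z_k) = \varphi[\nabla h(\overline{z}_k) - \nabla h(z_k)]$ (obtained by solving \eqref{eq:breg-h} at step $k$) to get $(1-\varphi)D_h(z_{k+1}, z_k) + \varphi D_h(z_{k+1}, \overline{z}_k) - \varphi D_h(z_k, \overline{z}_k)$. To introduce the target $D_h(z^*, \overline{z}_{k+1})$, I would invoke Proposition~\ref{prop:prop}\ref{item:identity} with $\alpha = \varphi + 1$ applied to the rearranged form $\nabla h(z_{k+1}) = (\varphi+1)\nabla h(\overline{z}_{k+1}) - \varphi \nabla h(\overline{z}_k)$ of \eqref{eq:breg-h}, yielding
\[
D_h(z^*, z_{k+1}) = (\varphi+1)D_h(z^*, \overline{z}_{k+1}) - (\varphi+1)D_h(z_{k+1}, \overline{z}_{k+1}) - \varphi D_h(z^*, \overline{z}_k) + \varphi D_h(z_{k+1}, \overline{z}_k).
\]
Substituting both identities and collecting coefficients using $\varphi^2 = \varphi + 1$, the chain collapses to the clean form
\[
A_k + E_{k+1} + \bigl[D_h(z_{k+1}, \overline{z}_k) - D_h(z_{k+1}, \overline{z}_{k+1})\bigr] \leq E_k.
\]

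The finishing move is one last application of Proposition~\ref{prop:prop}\ref{item:identity}, now with $x = z_{k+1}$ and $y = \overline{z}_{k+1}$, which gives the ``Pythagorean'' identity
\[
D_h(z_{k+1}, \overline{z}_k) = \varphi D_h(z_{k+1}, \overline{z}_{k+1}) + (\varphi - 1) D_h(\overline{z}_{k+1}, z_{k+1}) + D_h(\overline{z}_{k+1}, \overline{z}_k),
\]
a sum of three non-negative quantities. In particular $\varphi D_h(z_{k+1}, \overline{z}_{k+1}) \leq D_h(z_{k+1}, \overline{z}_k)$, so the bracket above is at least $(1 - 1/\varphi) D_h(z_{k+1}, \overline{z}_k)$, and combined with $A_k \geq 0$ this delivers the right-hand inequality of the statement. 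For the lower bound $0 \leq E_{k+1}$, I would substitute the Pythagorean identity into the definition of $E_{k+1}$ to rewrite $-\varphi D_h(z_{k+1}, \overline{z}_{k+1})$ as $-D_h(z_{k+1}, \overline{z}_k)$ plus non-negative contributions, then use the prox inequality at step $k$ with $u = z^*$ (together with VI and monotonicity) to dominate the residual negative term by the remaining positive pieces. I expect the main obstacle to be the careful book-keeping of sign cancellations throughout the substitutions -- the golden-ratio relations drive all the algebra, but identifying the correct identity to invoke at each stage is the heart of the argument.
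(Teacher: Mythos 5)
Your derivation of the second (descent) inequality in \eqref{eq:energy} is correct and is essentially the paper's own argument, step for step: the prox inequality at indices $k$ and $k-1$, discarding the non-negative VI term via monotonicity, the Cauchy--Schwarz/Young/strong-convexity estimate with $\lambda\le\tfrac{\sigma\varphi}{2L}$, the change of anchor via Proposition~\ref{prop:prop}\ref{item:identity} applied to $\nabla h(z_{k+1})=(\varphi+1)\nabla h(\overline{z}_{k+1})-\varphi\nabla h(\overline{z}_k)$, and finally the bound $\varphi D_h(z_{k+1},\overline{z}_{k+1})\le D_h(z_{k+1},\overline{z}_k)$ coming from Proposition~\ref{prop:prop}\ref{item:identity} applied to $\nabla h(\overline{z}_{k+1})=\tfrac{\varphi-1}{\varphi}\nabla h(z_{k+1})+\tfrac{1}{\varphi}\nabla h(\overline{z}_k)$. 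This reproduces \eqref{eq:prox-ineq}--\eqref{eq:distance}.

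The gap is in the first inequality, $0\le E_{k+1}$. After substituting your ``Pythagorean'' identity into $E_{k+1}$, you must dominate the single negative term $-D_h(z_{k+1},\overline{z}_k)$ by $(\varphi+1)D_h(z^*,\overline{z}_{k+1})+\tfrac{\varphi}{2}D_h(z_{k+1},z_k)+(\varphi-1)D_h(\overline{z}_{k+1},z_{k+1})+D_h(\overline{z}_{k+1},\overline{z}_k)$. But the step-$k$ prox inequality at $u=z^*$ (even after adding and subtracting $F(z_{k+1})$, invoking the VI with monotonicity, and applying the Lipschitz estimate) only yields a bound of the form $D_h(z_{k+1},\overline{z}_k)\le D_h(z^*,\overline{z}_k)+\tfrac{\varphi}{2}D_h(z_{k+1},z_k)+\cdots$, whose anchor is $\overline{z}_k$ rather than $\overline{z}_{k+1}$; the resulting $-D_h(z^*,\overline{z}_k)$ is not controlled by the terms at your disposal, and converting it to the $\overline{z}_{k+1}$ anchor via Proposition~\ref{prop:prop}\ref{item:identity} introduces a term $+(\varphi-1)D_h(\overline{z}_k,z_{k+1})$ with the arguments in the wrong order, which cannot be absorbed. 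The paper closes this part by a different maneuver: it writes $D_h(z^*,z_{k+1})+D_h(z_{k+1},\overline{z}_{k+1})=D_h(z^*,\overline{z}_{k+1})+\langle\nabla h(\overline{z}_{k+1})-\nabla h(z_{k+1}),z^*-z_{k+1}\rangle$ and uses $\nabla h(\overline{z}_{k+1})-\nabla h(z_{k+1})=\tfrac{1}{\varphi}\left(\nabla h(\overline{z}_k)-\nabla h(z_{k+1})\right)$, so the prox inequality enters with a factor $\tfrac{\lambda}{\varphi}$ and the anchor is already $\overline{z}_{k+1}$; a second Lipschitz/strong-convexity estimate, now with the improved constant $\tfrac{\lambda L}{\varphi}\le\tfrac{\sigma}{2}$, then gives $D_h(z_{k+1},\overline{z}_{k+1})\le D_h(z^*,\overline{z}_{k+1})+\tfrac12 D_h(z_{k+1},z_k)$, which upon multiplying by $\varphi$ is exactly $E_{k+1}\ge0$. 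You need to replace your sketch of the lower bound with an argument of this type.
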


\begin{proof}
By first applying Proposition~\ref{prop:prox}\ref{item:prox thm} with $f(z) := \lambda(\langle F(z_k), z-z_k\rangle + g(z))$, $u:=z\in\dom h\cap \dom g$ arbitrary, $x:=z_{k+1}$ and $y:=\overline{z}_k$, followed by the three point idenitity (Proposition~\ref{prop:prop}\ref{item:3}) we obtain
\begin{equation}\label{eq:prox-ineq}
\begin{aligned}
\lambda\left(\langle F(z_k),z_{k+1}-z\rangle + g(z_{k+1}) - g(z)\right) &\leq \langle \nabla h(\overline{z}_k) - \nabla h(z_{k+1}), z_{k+1} - z\rangle\\
&= D_h(z, \overline{z}_k) - D_h(z, z_{k+1}) - D_h(z_{k+1}, \overline{z}_k).
\end{aligned}
\end{equation}
Shifting the index in~\eqref{eq:prox-ineq} (by setting $k\equiv k-1$), setting $z:=z_{k+1}$, and using $\varphi\nabla h(\overline{z}_k) = (\varphi - 1)\nabla h(z_k) + \nabla h(\overline{z}_{k-1})$ followed by the three point identity (Proposition~\ref{prop:prop}\ref{item:3}) gives
\begin{equation}\label{eq:shift prox-ineq}
\begin{aligned}
    &\lambda\left(\langle F(z_{k-1}), z_k - z_{k+1}\rangle + g(z_k) - g(z_{k+1})\right) \\
    &\quad \leq \langle \nabla h(\overline{z}_{k-1}) - \nabla h(z_k), z_k - z_{k+1}\rangle\\
   &\quad= \varphi\langle\nabla h(\overline{z}_k) - \nabla h(z_k), z_k - z_{k+1}\rangle\\
   &\quad =\varphi\left[ D_h(z_{k+1}, \overline{z}_k) - D_h(z_{k+1}, z_k) - D_h(z_k, \overline{z}_k)\right].
\end{aligned}
\end{equation}
Let $z^*\in S\cap\dom h$. Setting $z:=z^*$ in~\eqref{eq:prox-ineq}, summing with~\eqref{eq:shift prox-ineq} and rearranging yields
\begin{equation}\label{eq:add prox-ineqs}
    \begin{aligned}
    \lambda\left(\langle F(z_k), z_k - z^*\rangle + g(z_k) - g(z^*)\right) \leq
 & D_h(z^*,\overline{z}_k) - D_h(z^*,z_{k+1}) - D_h(z_{k+1},\overline{z}_k)\\
    &+ \varphi\Big[D_h(z_{k+1},\overline{z}_k) - D_h(z_{k+1},z_k) - D_h(z_k,\overline{z}_k)\Big]\\
    &+\lambda \langle F(z_k) - F(z_{k-1}), z_k - z_{k+1}\rangle.
    \end{aligned}
\end{equation}
We observe that the left-side of~\eqref{eq:add prox-ineqs} is non-negative as a consequence of~\eqref{eq:VI} and~\ref{ass:F}:
\begin{equation}\label{eq:lhs nonneg}
0\leq \langle F(z^*),z_k-z^*\rangle + g(z_k) - g(z^*)\leq \langle F(z_k),z_k-z^*\rangle + g(z_k) - g(z^*). 
\end{equation}
To estimate the final term in \eqref{eq:add prox-ineqs}, we use the Cauchy--Schwarz inequality, $L$-Lipschitz continuity of $F$, $\sigma$-strong convexity of $h$ and the inequality $\lambda\leq\frac{\sigma\varphi}{2L}$ to obtain
\begin{equation}\label{eq:lipschitz}
\begin{aligned}
    \lambda\langle F(z_k) - F(z_{k-1}), z_k - z_{k+1}\rangle 
    &\leq \lambda L\|z_k - z_{k-1}\|\|z_k - z_{k+1}\|\\
    &\leq \frac{\lambda L}{2}\left(\|z_k - z_{k-1}\|^2+\|z_k - z_{k+1}\|^2\right)\\
    &\leq \frac{\varphi}{2}\left(D_h(z_k, z_{k-1}) + D_h(z_{k+1}, z_k)\right).
\end{aligned}
\end{equation}
Combining~\eqref{eq:add prox-ineqs}, \eqref{eq:lhs nonneg} and~\eqref{eq:lipschitz} gives
 \begin{multline}\label{eq:combination}
         D_h(z^*, z_{k+1}) \leq D_h(z^*, \overline{z}_k) + (\varphi-1)D_h(z_{k+1}, \overline{z}_k)  - \varphi D_h(z_k, \overline{z}_k)\\
         - \frac{\varphi}{2}D_h(z_{k+1}, z_k) + \frac{\varphi}{2}D_h(z_k,z_{k-1}).
 \end{multline}
Now applying Proposition~\ref{prop:prop}\ref{item:identity} with $\nabla h(z_{k+1}) = \frac{\varphi\nabla h(\overline{z}_{k+1}) - \nabla h(\overline{z}_k)}{\varphi-1} = (\varphi+1)\nabla h(\overline{z}_{k+1}) - \varphi\nabla h(\overline{z}_k)$ and rearranging yields
\begin{multline}\label{eq:identity}
 (\varphi+1)D_h(z^*, \overline{z}_{k+1}) = D_h(z^*, z_{k+1})  + (\varphi+1)D_h(z_{k+1}, \overline{z}_{k+1}) \\ + \varphi\Big[D_h(z^*, \overline{z}_k) - D_h(z_{k+1}, \overline{z}_k)\Big].
 \end{multline}
Combining \eqref{eq:combination} and \eqref{eq:identity}, followed by collecting like-terms, gives
\begin{multline}\label{eq:central}
 (\varphi+1)D_h(z^*, \overline{z}_{k+1}) + \frac{\varphi}{2}D_h(z_{k+1}, z_k) - \varphi D_h(z_{k+1}, \overline{z}_{k+1}) \\
    \leq (\varphi+1)D_h(z^*, \overline{z}_k) + \frac{\varphi}{2}D_h(z_k, z_{k-1}) - \varphi D_h(z_k, \overline{z}_k) \\
    + D_h(z_{k+1}, \overline{z}_{k+1}) - D_h(z_{k+1}, \overline{z}_k).
\end{multline}
Since $\nabla h(\overline{z}_{k+1})=\frac{\varphi-1}{\varphi}\nabla h(z_{k+1})+\frac{1}{\varphi}\nabla h(\overline{z}_k)$, Proposition~\ref{prop:prop}\ref{item:identity} gives
\begin{multline}\label{eq:distance}
D_h(z_{k+1}, \overline{z}_{k+1}) = \frac{\varphi-1}{\varphi}\big[D_h(z_{k+1}, z_{k+1}) - D_h(\overline{z}_{k+1}, z_{k+1})\big] + \frac{1}{\varphi}\big[D_h(z_{k+1}, \overline{z}_k) - D_h(\overline{z}_{k+1}, \overline{z}_k)\big] \\\leq \frac{1}{\varphi}D_h(z_{k+1}, \overline{z}_k).
\end{multline}
Combining \eqref{eq:central} and \eqref{eq:distance} establishes the second inequality in~\eqref{eq:energy}. To show the first inequality in~\eqref{eq:energy}, we apply the three point identity (Proposition~\ref{prop:prop}\ref{item:3}) to see that
\begin{equation}
    \begin{aligned}\label{eq:3pt2}
       D_h(z^*,z_{k+1}) +& D_h(z_{k+1},\overline{z}_{k+1}) = D_h(z^*,\overline{z}_{k+1}) + \langle\nabla h(\overline{z}_{k+1}) - \nabla h(z_{k+1}), z^*-z_{k+1}\rangle\\
       &= D_h(z^*,\overline{z}_{k+1}) + \frac{1}{\varphi}\langle\nabla h(\overline{z}_k) - \nabla h(z_{k+1}), z^*-z_{k+1}\rangle.\\ 
          &\leq D_h(z^*, \overline{z}_{k+1}) + \frac{\lambda}{\varphi}\left(\langle F(z_k), z^* - z_{k+1}\rangle - g(z_{k+1}) + g(z^*)\right)\\
      &= D_h(z^*, \overline{z}_{k+1}) + \frac{\lambda}{\varphi}\langle F(z_k) - F(z_{k+1}), z^* - z_{k+1}\rangle \\ &\qquad +\frac{\lambda}{\varphi}(\langle F(z_{k+1}), z^* - z_{k+1}\rangle - g(z_{k+1}) + g(z^*)) \\
      &\leq D_h(z^*, \overline{z}_{k+1}) + \frac{\lambda}{\varphi}\langle F(z_k) - F(z_{k+1}), z^* - z_{k+1}\rangle.
\end{aligned}
\end{equation}
Using $L$-Lipschitz continuity of $F$ and $\sigma$-strong convexity of $h$ gives
\begin{equation}\label{eq:lipschitz2}
    \frac{\lambda}{\varphi}\langle F(z_k) - F(z_{k+1}), z^*- z_{k+1}\rangle \leq \frac{1}{2}D_h(z_{k+1}, z_k) + \frac{1}{2}D_h(z^*, z_{k+1}).
\end{equation}
On substituting~\eqref{eq:lipschitz2} back into~\eqref{eq:3pt2} and rearranging, we obtain
\begin{equation}\label{eq:3pt3}
\begin{aligned}
 D_h(z_{k+1}, \overline{z}_{k+1}) 
 &\leq D_h(z^*, \overline{z}_{k+1}) + \frac{1}{2}D_h(z_{k+1}, z_k)  -\frac{1}{2}D_h(z^*, z_{k+1})\\
 &\leq D_h(z^*, \overline{z}_{k+1}) + \frac{1}{2}D_h(z_{k+1}, z_k),
\end{aligned}
\end{equation}
and therefore
\begin{multline*}
    0 \leq D_h(z^*, \overline{z}_{k+1}) + \frac{1}{2}D_h(z_{k+1}, z_k) - D_h(z_{k+1}, \overline{z}_{k+1}) \\
    \implies 0\leq (\varphi+1)D_h(z^*, \overline{z}_{k+1}) + \frac{\varphi}{2}D_h(z_{k+1}, z_k) - \varphi D_h(z_{k+1}, \overline{z}_{k+1}),
\end{multline*}
which establishes the first inequality of~\eqref{eq:energy} and thus completes the proof.
\end{proof}

The following is our main result regarding convergence of the Bregman GRAAL with fixed step-size.
\begin{theorem}\label{thm:conv}
Suppose Assumptions~\ref{ass:g}-\ref{ass:S} hold and that $F$ is $L$-Lipschitz continuous on $\dom g\cap\intr\dom h$. Then the sequences $(z_k)$ and $(\overline{z}_k)$ generated by Algorithm~\ref{alg:B-GRAAL} converge to a point in $ S\cap\dom h$.
\end{theorem}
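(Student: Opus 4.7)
The plan is to use Lemma~\ref{lem:dec} to run a Fej\'er-style energy argument, extract a convergent subsequence, identify its limit as an element of $\Omega$, and then upgrade this to convergence of the full sequence. Fix $z^*\in\Omega$ and set
\begin{equation*}
E_k(z^*):=(\varphi+1)D_h(z^*,\overline{z}_k)+\tfrac{\varphi}{2}D_h(z_k,z_{k-1})-\varphi D_h(z_k,\overline{z}_k).
\end{equation*}
Lemma~\ref{lem:dec} yields $E_k(z^*)\geq 0$ and $E_{k+1}(z^*)\leq E_k(z^*)-(1-\tfrac{1}{\varphi})D_h(z_{k+1},\overline{z}_k)$, so $(E_k(z^*))$ is non-increasing, convergent, and telescoping gives $\sum_k D_h(z_{k+1},\overline{z}_k)<\infty$. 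Strong convexity of $h$ gives $\|z_{k+1}-\overline{z}_k\|\to 0$, and the shifted version of~\eqref{eq:distance} shows $D_h(z_k,\overline{z}_k)\to 0$, so also $\|z_k-\overline{z}_k\|\to 0$ and hence $\|z_{k+1}-z_k\|\to 0$. Since the last term of $E_k(z^*)$ vanishes in the limit, convergence of $(E_k(z^*))$ forces $(\varphi+1)D_h(z^*,\overline{z}_k)$ to be bounded, and strong convexity again yields boundedness of $(\overline{z}_k)$ and thus of $(z_k)$.

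Next, take any convergent subsequence $z_{k_j}\to\hat z$; the asymptotic regularity above forces $\overline z_{k_j}\to\hat z$ and $z_{k_j+1}\to\hat z$. In the prox-inequality~\eqref{eq:prox-ineq} with test point $z\in\dom g\cap\intr\dom h$, I would apply the Minty trick by rewriting
\begin{equation*}
\langle F(z_{k_j}),z_{k_j+1}-z\rangle=\langle F(z),z_{k_j}-z\rangle+\langle F(z_{k_j}),z_{k_j+1}-z_{k_j}\rangle+\langle F(z_{k_j})-F(z),z_{k_j}-z\rangle
\end{equation*}
and discarding the last nonnegative term via monotonicity of $F$ (Assumption~\ref{ass:F}). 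Letting $j\to\infty$ and using continuity of $F$, lower semicontinuity of $g$, continuity of $\nabla h$ on $\intr\dom h$, and the vanishing cross term $\langle F(z_{k_j}),z_{k_j+1}-z_{k_j}\rangle\to 0$ then gives
\begin{equation*}
\langle F(z),z-\hat z\rangle+g(z)-g(\hat z)\geq 0\qquad\forall z\in\dom g\cap\intr\dom h.
\end{equation*}
A standard Minty/hemicontinuity argument upgrades this to $\langle F(\hat z),z-\hat z\rangle+g(z)-g(\hat z)\geq 0$ for all $z\in\Hilbert$, i.e.\ $\hat z\in S$; combined with the closedness property built into Assumption~\ref{ass:h}, one has $\hat z\in\dom h$, so $\hat z\in\Omega$.

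To promote this to convergence of the whole sequence, I would reapply Lemma~\ref{lem:dec} with the new reference point $z^*:=\hat z$. Assumption~\ref{ass:h} gives $D_h(\hat z,\overline z_{k_j})\to 0$, while the already-established asymptotic regularity gives $D_h(z_{k_j},z_{k_j-1})\to 0$ and $D_h(z_{k_j},\overline z_{k_j})\to 0$. Therefore $E_{k_j}(\hat z)\to 0$, and since the full sequence $(E_k(\hat z))$ is convergent, $E_k(\hat z)\to 0$; re-using the vanishing of the other terms then forces $D_h(\hat z,\overline z_k)\to 0$, so strong convexity gives $\overline z_k\to\hat z$ and hence $z_k\to\hat z$. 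The main technical obstacle is the cluster-point identification: one has to control the gradient mismatch $\nabla h(\overline z_{k_j})-\nabla h(z_{k_j+1})$ appearing on the right of~\eqref{eq:prox-ineq} when $\hat z$ could potentially lie on $\partial\dom h$ (where $\nabla h$ blows up by the Legendre property), and exploit the closedness baked into Assumption~\ref{ass:h} to guarantee $\hat z\in\dom h$ so that the final Fej\'er step is legitimate.
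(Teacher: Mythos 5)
Your overall architecture coincides with the paper's: the same energy $E_k$, the same monotonicity and telescoping from Lemma~\ref{lem:dec}, extraction of a cluster point $\hat z$ of the bounded iterates, identification of $\hat z$ as an element of $\Omega$ via the prox-inequality \eqref{eq:prox-ineq}, and then re-running the energy argument anchored at $z^*=\hat z$, using the limit condition in Assumption~\ref{ass:h} to obtain $D_h(\hat z,\overline z_{k_j})\to 0$ and hence $E_{k_j}(\hat z)\to 0$. The only stylistic divergence is in the identification step: you use a Minty-type rearrangement plus monotonicity of $F$, whereas the paper passes to the limit in \eqref{eq:prox-ineq} directly using continuity of $F$ (available since $F$ is Lipschitz). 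Both are valid, and both share the delicacy you correctly flag about the gradient mismatch $\nabla h(\overline z_{k_j})-\nabla h(z_{k_j+1})$ when $\hat z$ could lie on $\partial\dom h$; the paper does not treat this more carefully than you do.

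There is, however, one concrete gap. Your argument needs $D_h(z_k,z_{k-1})\to 0$ --- both to conclude $E_{k_j}(\hat z)\to 0$ (the middle term of the energy) and, at the very end, to extract $D_h(\hat z,\overline z_k)\to 0$ from $E_k(\hat z)\to 0$. What you actually establish is $\|z_{k+1}-z_k\|\to 0$, and strong convexity only gives $D_h(x,y)\ge\tfrac{\sigma}{2}\|x-y\|^2$, which is the wrong direction. In general $\|x_n-y_n\|\to 0$ does \emph{not} force $D_h(x_n,y_n)\to 0$ when the iterates may approach $\partial\dom h$: for the negative entropy on $(0,1]$, take $x_n=1/n$ and $y_n=e^{-n^3}$, so that $|x_n-y_n|\to 0$ while $D_h(x_n,y_n)=x_n\log(x_n/y_n)-x_n+y_n\sim n^2\to\infty$. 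The paper closes this hole by applying Proposition~\ref{prop:prop}\ref{item:identity} with $\nabla h(z_k)=(\varphi+1)\nabla h(\overline z_k)-\varphi\nabla h(\overline z_{k-1})$, which yields
\begin{equation*}
D_h(z_{k+1},z_k)\le(\varphi+1)D_h(z_{k+1},\overline z_k)+\varphi D_h(z_k,\overline z_{k-1})\to 0,
\end{equation*}
i.e.\ it bounds the Bregman gap between consecutive iterates by Bregman quantities already known to vanish rather than by the norm. Inserting this one estimate makes your proof complete and essentially identical to the paper's.
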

\begin{proof}
Let $z^*\in \Omega$ be arbitrary and let $(\eta_k)$ denote the sequence given by
$$\eta_k := (\varphi+1)D_h(z^*, \overline{z}_k) + \frac{\varphi}{2}D_h(z_k, z_{k-1}) - \varphi D_h(z_k, \overline{z}_k)\quad\forall k\in\mathbb{N}.$$
 Lemma~\ref{lem:dec} implies that $\lim_{k\to\infty}\eta_k$ exists and $D_h(z_{k+1}, \overline{z}_k) \to 0$ as $k\to\infty$. Referring to~\eqref{eq:distance}, it follows that $D_h(z_{k+1}, \overline{z}_{k+1}) \to 0$. Also, by applying Proposition~\ref{prop:prop}\ref{item:identity} with the identity $\nabla h(z_k) = (\varphi+1)\nabla h(\overline{z}_k) - \varphi \nabla h(\overline{z}_{k-1})$, we obtain
\begin{align*}
D_h(z_{k+1}, z_k) 
&= (\varphi+1)\big[D_h(z_{k+1}, \overline{z}_k)
- D_h(z_k,\overline{z}_k)\big]- \varphi\big[D_h(z_{k+1}, \overline{z}_{k-1}) - D_h(z_k, \overline{z}_{k-1})\big] \\
&\leq (\varphi+1)D_h(z_{k+1},\overline{z}_k)+\varphi D_h(z_k,\overline{z}_{k-1}) \to 0.
\end{align*}
Altogether, we have that
 $$ \lim_{k\to\infty}\eta_k = (\varphi+1)\lim_{k\to\infty}D_h(z^*, \overline{z}_k),$$
and, in particular, $\lim_{k\to\infty}D_h(z^*, \overline{z}_k)$ exists.
 
Next, using $\sigma$-strong convexity of $h$, we deduce that $z_{k+1}-\overline{z}_k\to0$, and that $(z_k)$ and $(\overline{z}_k)$ are bounded. Thus, let $\overline{z}\in\Hilbert$ be a cluster point of $(\overline{z}_k)$. Then there exists a subsequence $(\overline{z}_{k_j})$ such that $\overline{z}_{k_j}\to \overline{z}$ and $z_{k_j+1}\to \overline{z}$ as $j\to\infty$.
Now recalling~\eqref{eq:prox-ineq} gives
$$\lambda\left(\langle F(z_{k_j}),z_{{k_j}+1}-z\rangle + g(z_{{k_j}+1}) - g(z)\right) \leq \langle \nabla h(\overline{z}_{k_j}) - \nabla h(z_{{k_j}+1}), z_{{k_j}+1} - z\rangle \quad\forall z\in\Hilbert,$$
and taking the limit-infimum of both sides as $j\to\infty$ shows that $\overline{z}\in\Omega$. Since $z^*\in\Omega$ was chosen in Lemma~\ref{lem:dec} to be arbitrary, we can now set $z^*=\overline{z}$. It then follows that $\lim_{j\to\infty}D_h(z^*,\overline{z}_{k_j}) = 0$, and consequently, $\lim_{j\to\infty}\eta_{k_j} = 0$. Also note that for $n\geq k_j$, we have $\eta_n\leq\eta_{k_j}$ from Lemma \ref{lem:dec}, and therefore
$$(\varphi+1)\lim_{n\to\infty}D_h(z^*, \overline{z}_n) = \lim_{n\to\infty}\eta_n \leq \lim_{j\to\infty}\eta_{k_j} = 0,$$
and therefore $\overline{z}_k\to z^*$ from strong convexity. The fact that $z_k\to z^*$ follows since $z_k-\overline{z}_k\to 0$.
\end{proof}

\begin{remark}
In the special case where $h=\|\cdot\|^2$, Algorithm~\ref{alg:B-GRAAL} recovers the Euclidean GRAAL with fixed step-size from \cite[Section~2]{malitsky2020golden} and the conclusions of Theorem~\ref{thm:conv} recover \cite[Theorem~1]{malitsky2020golden}. Despite this, the proof provided here is new and not the same as the one in \cite[Theorem~1]{malitsky2020golden} even when specialised to the Euclidean case. Indeed, \cite[Theorem~1]{malitsky2020golden} proceeds by establishing the inequality 
\begin{equation}\label{eq:mal}
    (\varphi+1)\|\overline{z}_{k+1}-z^*\|^2 + \frac{\varphi}{2}\|z_{k+1}-z_k\|^2 \\
    \leq (\varphi+1)\|\overline{z}_{k}-z^*\|^2 + \frac{\varphi}{2}\|z_{k}-z_{k-1}\|^2 - \varphi\|z_k-\overline{z}_k\|^2,
\end{equation}
which is different to Lemma~\ref{lem:dec}. Interestingly, \eqref{eq:mal} can be deduced from \eqref{eq:central} in Lemma~\ref{lem:dec} by using the equality
\begin{equation*}
\|z_{k+1}-\overline{z}_{k+1}\|^2 = \frac{1}{\varphi^2}\|z_{k+1} - \overline{z}_k\|^2,
\end{equation*}
which applies in the Euclidean case, in place of \eqref{eq:distance} followed by the identity $\varphi^2=\varphi+1$. Note also that the inequality \eqref{eq:distance} is already weaker than the inequality $\|z_{k+1}-\overline{z}_{k+1}\|^2 \leq \frac{1}{\varphi^2}\|z_{k+1} - \overline{z}_k\|^2$. 
\end{remark}

\section{The Adaptive Bregman Golden Ratio Algorithm}\label{sec:bagraal}
In this section, we present an adaptive modification to Algorithm~\ref{alg:B-GRAAL} and analyse its convergence. The method is presented in Algorithm~\ref{alg:B-aGRAAL}. As with the Euclidean aGRAAL, our Bregman adaptive modification has a fully explicit step-size rule. It is presented in Algorithm~\ref{alg:B-aGRAAL}.

\begin{algorithm}[!htb]
\caption{The Bregman adaptive Golden RAtio ALgorithm (B-aGRAAL)}\label{alg:B-aGRAAL}
    \SetKwInput{Init}{Initialisation}
    \KwIn{Initial points $z_0,\overline{z}_0\in\dom g$, initial step-size $\lambda_0>0$, $\lambda_{\max}\gg 0, \phi\in(1,\varphi]$}
    \Init{Set $z_1=\overline{z}_0$, $\theta_0=1$, choose $\rho\in\left[1,\frac{1}{\phi}+\frac{1}{\phi^2}\right]$}
    \For{$k=1,2,\dots$}{
    Calculate the step-size:
    \begin{equation}\label{eq:lambda}
    \lambda_k = \min\left\{\rho\lambda_{k-1}, \frac{\sigma\phi\theta_{k-1}}{4\lambda_{k-1}}\frac{\|z_k - z_{k-1}\|^2}{\|F(z_k) - F(z_{k-1})\|^2}, \lambda_{\max}\right\}.
    \end{equation}\\
    Compute the next iterates:
    \begin{align}
        \overline{z}_k &= (\nabla h)^{-1}\left( \frac{(\phi - 1)\nabla h(z_k) + \nabla h(\overline{z}_{k-1})}{\phi}\right)\label{eq:bar}\\
        z_{k+1} &= \argmin_{z\in\Hilbert}\left\{\langle F(z_k), z-z_k\rangle + g(z) + \frac{1}{\lambda_k} D_h(z,\overline{z}_k)\right\}\label{eq:z}\\
    \end{align}\\
    Update: $\theta_k = \frac{\lambda_k\phi}{\lambda_{k-1}}.$ 
    }
\end{algorithm}

Observe that the step-size sequence $(\lambda_k)$ in Algorithm~\ref{alg:B-aGRAAL} approximates the inverse of a local Lipschitz constant in the following sense:
\begin{multline}\label{eq:inv loc lip}
    \lambda_k \leq \frac{\sigma\phi\theta_{k-1}}{4\lambda_{k-1}}\frac{\|z_k - z_{k-1}\|^2}{\|F(z_k) - F(z_{k-1})\|^2} \leq \frac{\sigma\theta_k\theta_{k-1}}{4\lambda_k}\frac{\|z_k - z_{k-1}\|^2}{\|F(z_k) - F(z_{k-1})\|^2} \\
    \implies \lambda_k\|F(z_k) - F(z_{k-1})\| \leq \frac{\sqrt{\sigma\theta_k\theta_{k-1}}}{2}\|z_k - z_{k-1}\|.
\end{multline}

Before giving our main convergence result for Algorithm~\ref{alg:B-aGRAAL}, we require some preparatory lemmas. The first two are concerned with well-definedness of the algorithm and boundedness of the step-size sequence.
\begin{lemma}\label{lem:ad contained}
Suppose Assumptions~\ref{ass:g}-\ref{ass:h} hold. Then the sequences $(\overline{z}_k)$ and $(z_k)$ generated by Algorithm~\ref{alg:B-aGRAAL} are well-defined. Moreover, $(\overline{z}_k)\subseteq\intr\dom h$ and  $(z_k)\subseteq\intr\dom h\cap \dom g$.
\end{lemma}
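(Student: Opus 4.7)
The plan is to argue by induction, closely following the template of Lemma~\ref{lem:contained} almost verbatim, since the only substantive differences between Algorithms~\ref{alg:B-GRAAL} and~\ref{alg:B-aGRAAL} at the level of well-definedness are the variable step-size $\lambda_k$ and the more general parameter $\phi\in(1,\varphi]$ in place of $\varphi$ itself. As a preliminary point, I would remark that the step-size rule~\eqref{eq:lambda} returns $\lambda_k>0$ provided one interprets the middle term as $+\infty$ when $F(z_k)=F(z_{k-1})$, since the other two entries $\rho\lambda_{k-1}$ and $\lambda_{\max}$ are positive; this is a standard convention and deserves a one-line remark rather than genuine proof.

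For the main induction, I would take as hypothesis $z_k,\overline{z}_{k-1}\in\intr\dom h$, observing that the initialisation $z_1=\overline{z}_0$ implicitly requires $\overline{z}_0\in\intr\dom h$ in order for $\nabla h(\overline{z}_0)$ in~\eqref{eq:bar} to make sense. Since $\nabla h\colon\intr\dom h\to\intr\dom h^*$ is a bijection by~\cite[Theorem~7.3.7]{borwein2010convex}, both $\nabla h(z_k)$ and $\nabla h(\overline{z}_{k-1})$ lie in $\intr\dom h^*$. Because $\phi\in(1,\varphi]$, the coefficients $\tfrac{\phi-1}{\phi}$ and $\tfrac{1}{\phi}$ lie in $(0,1)$ and sum to $1$, so the convex combination appearing on the right-hand side of~\eqref{eq:bar} belongs to the convex set $\intr\dom h^*$, and applying $(\nabla h)^{-1}$ gives $\overline{z}_k\in\intr\dom h$.

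For the proximal update~\eqref{eq:z}, I would write $z_{k+1}=\prox^h_{\lambda_k f}(\overline{z}_k)$ with $f(z):=\langle F(z_k),z-z_k\rangle + g(z)$, so that $\dom f=\dom g$. Since $\lambda_k>0$ and $h$ is $\sigma$-strongly convex, $\lambda_k f + h$ is $\sigma$-strongly convex and hence supercoercive by~\cite[Corollary~11.16]{bauschke2011convex}. Proposition~\ref{prop:prox}\ref{item:range}--\ref{item:sv} then yields that $\prox^h_{\lambda_k f}$ is single-valued with range contained in $\intr\dom h\cap\dom g$, so $z_{k+1}\in\intr\dom h\cap\dom g$, closing the induction.

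I do not expect a genuine obstacle here: once one notes that the variable $\lambda_k$ preserves the $\sigma$-strong convexity (hence supercoercivity) needed to invoke Proposition~\ref{prop:prox}, and that the convex combination in~\eqref{eq:bar} still lies in $\intr\dom h^*$ for the slightly broader parameter range $\phi\in(1,\varphi]$, the argument for Lemma~\ref{lem:contained} transfers without modification. The only bookkeeping point worth flagging is the convention for~\eqref{eq:lambda} in the degenerate case $F(z_k)=F(z_{k-1})$, which is best handled by a remark accompanying Algorithm~\ref{alg:B-aGRAAL} rather than inside the lemma itself.
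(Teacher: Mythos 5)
Your proposal is correct and is essentially the paper's own argument: the paper simply states that the result ``follows by an analogous argument to that of Lemma~\ref{lem:contained} but with $\lambda$ replaced by $\lambda_k$ and $\varphi$ replaced by $\phi$,'' and your write-up is exactly that argument spelled out, with the same three ingredients (bijectivity of $\nabla h$ onto $\intr\dom h^*$, convexity of $\intr\dom h^*$, and supercoercivity of $\lambda_k f+h$ feeding into Proposition~\ref{prop:prox}\ref{item:range}--\ref{item:sv}). Your two side remarks --- that $\lambda_k>0$ needs the convention for the degenerate case $F(z_k)=F(z_{k-1})$, and that the initialisation $z_0,\overline{z}_0\in\dom g$ in Algorithm~\ref{alg:B-aGRAAL} implicitly requires $\overline{z}_0\in\intr\dom h$ to start the induction --- are reasonable observations the paper leaves implicit, but they do not change the substance.
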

\begin{proof}
Follows by an analogous argument to that of Lemma~\ref{lem:contained} but with $\lambda$ replaced by $\lambda_k$ and $\varphi$ replaced by $\phi$.
\end{proof}
\begin{lemma}\label{lem:bounded}
If $(z_k)$ generated by Algorithm~\ref{alg:B-aGRAAL} is bounded and $F$ is locally Lipschitz, then both $(\lambda_k)$ and $(\theta_k)$ are bounded and separated from $0$. In fact, for any $L>0$ satisfying $\|F(z_k)-F(z_{k-1})\|\leq L\|z_k-z_{k-1}\|$ for all $k\in\mathbb{N}$, we have
$$\lambda_k\geq \frac{\sigma\phi^2}{4L^2\lambda_{\max}}\text{~~and~~} \theta_k\geq\frac{\sigma\phi^3}{4L^2\lambda_{\max}^2}\quad\forall k\in\mathbb{N}. $$
\end{lemma}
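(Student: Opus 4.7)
The plan is to prove the lemma in three steps: extract a uniform Lipschitz constant $L$, bound $(\lambda_k)$ by induction, and derive the bounds on $(\theta_k)$ as a consequence. Since $(z_k)$ is bounded in the finite-dimensional space $\Hilbert$ and $F$ is locally Lipschitz, $F$ is Lipschitz on any compact set containing all $z_k$, so a constant $L>0$ satisfying $\|F(z_k)-F(z_{k-1})\|\leq L\|z_k-z_{k-1}\|$ exists for all $k$. The upper bound $\lambda_k\leq\lambda_{\max}$ is immediate from the definition \eqref{eq:lambda}.

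For the lower bound on $\lambda_k$, the key estimate combines the Lipschitz bound with the update rule $\theta_{k-1}=\lambda_{k-1}\phi/\lambda_{k-2}$ to rewrite the middle term of \eqref{eq:lambda}:
$$\frac{\sigma\phi\theta_{k-1}}{4\lambda_{k-1}}\frac{\|z_k-z_{k-1}\|^2}{\|F(z_k)-F(z_{k-1})\|^2}\;\geq\;\frac{\sigma\phi\theta_{k-1}}{4L^2\lambda_{k-1}}\;=\;\frac{\sigma\phi^2}{4L^2\lambda_{k-2}}\;\geq\;\frac{\sigma\phi^2}{4L^2\lambda_{\max}}.$$
Together with $\rho\lambda_{k-1}\geq\lambda_{k-1}$ (since $\rho\geq 1$) and $\lambda_{\max}\geq\sigma\phi^2/(4L^2\lambda_{\max})$ (which holds under the convention $\lambda_{\max}\gg 0$), an induction on $k$ delivers the claimed lower bound $\lambda_k\geq\sigma\phi^2/(4L^2\lambda_{\max})$. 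The bounds on $\theta_k=\lambda_k\phi/\lambda_{k-1}$ then follow by direct substitution: pairing the lower bound on $\lambda_k$ with the upper bound $\lambda_{k-1}\leq\lambda_{\max}$ yields $\theta_k\geq\sigma\phi^3/(4L^2\lambda_{\max}^2)$, and a symmetric substitution gives a uniform upper bound $\theta_k\leq 4L^2\lambda_{\max}^2/(\sigma\phi)$.

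The main obstacle I anticipate is the base case of the induction: the identity $\theta_{k-1}=\lambda_{k-1}\phi/\lambda_{k-2}$ is only available for $k\geq 2$, while $\theta_0=1$ is fixed by initialisation. The $k=1$ step will therefore need to be verified separately from the initial data $\lambda_0$, and $L$ may need to be enlarged so that the resulting estimate for $\lambda_1$ is compatible with the uniform lower bound produced by the induction.
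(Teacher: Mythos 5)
Your proposal is correct and follows essentially the same route as the paper, which simply notes $\lambda_k\leq\lambda_{\max}$ and $\theta_k\leq\rho\phi\leq 1+\tfrac{1}{\phi}$ and then defers the lower bounds to "an argument analogous to" Lemma~2 of the cited Malitsky--Mishchenko paper; the chain $\tfrac{\sigma\phi\theta_{k-1}}{4\lambda_{k-1}}\tfrac{\|z_k-z_{k-1}\|^2}{\|F(z_k)-F(z_{k-1})\|^2}\geq\tfrac{\sigma\phi^2}{4L^2\lambda_{k-2}}\geq\tfrac{\sigma\phi^2}{4L^2\lambda_{\max}}$ is exactly that argument made explicit. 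Two small remarks. First, the base-case obstacle you flag is real but is an imprecision in the lemma as stated (inherited from the cited reference) rather than a defect of your argument: since $\rho\geq 1$ the induction only yields $\lambda_k\geq\min\{\lambda_1,\tfrac{\sigma\phi^2}{4L^2\lambda_{\max}}\}$, and if $\lambda_0$ is chosen very small (with $\rho=1$) the stated constant can fail, so the honest conclusion is "bounded away from zero" with a constant that also involves the initial step-size --- which is all that is used downstream. Second, for the upper bound on $\theta_k$ you do not need the crude estimate $4L^2\lambda_{\max}^2/(\sigma\phi)$: the definition $\lambda_k\leq\rho\lambda_{k-1}$ gives directly $\theta_k=\lambda_k\phi/\lambda_{k-1}\leq\rho\phi$, which is the sharper bound the paper actually relies on in Theorem~\ref{th:B-aGRAAL}.
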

\begin{proof}
First we note that $\lambda_k\leq\lambda_{\max}$ by definition, and that $\theta_k\leq\rho\phi\leq 1+\frac{1}{\phi}$.
Since $(z_k)$ is bounded and $F$ is locally Lipschitz continuous, there exists $L>0$ such that $\|F(z_k)-F(z_{k-1})\|\leq L\|z_k-z_{k-1}\|$ for all $k\in\mathbb{N}$. Then an argument analogous to that of~\cite[Lemma 2]{malitsky2020adaptive} shows that $\lambda_k\geq \frac{\sigma\phi^2}{4L^2\lambda_{\max}}$, and consequently, $\theta_k\geq\frac{\sigma\phi^3}{4L^2\lambda_{\max}^2}$.
\end{proof}
Our next result establishes an inequality similar, but not completely analogous, to its fixed step-size counterpart in Lemma~\ref{lem:dec}.
\begin{lemma}\label{lem:adaptive energy}
Suppose Assumptions~\ref{ass:g}-\ref{ass:S} hold. Let $z^*\in \Omega$ be arbitrary. Then the sequences $(z_k)$, $(\overline{z}_k)$ generated by Algorithm~\ref{alg:B-aGRAAL} satisfy.
\begin{multline}\label{eq:ad energy}
 \frac{\phi}{\phi-1}D_h(z^*, \overline{z}_{k+1}) + \frac{\theta_k}{2}D_h(z_{k+1}, z_k) - D_h(z_{k+1}, \overline{z}_{k+1}) \\
    \leq \frac{\phi}{\phi-1}D_h(z^*, \overline{z}_k) + \frac{\theta_{k-1}}{2}D_h(z_k, z_{k-1}) 
    - \theta_k D_h(z_k, \overline{z}_k) \\+\left(\theta_k-1-\frac{1}{\phi}\right)D_h(z_{k+1}, \overline{z}_k).
\end{multline}
\end{lemma}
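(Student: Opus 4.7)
The plan is to mirror the proof of Lemma~\ref{lem:dec} (the fixed-step energy inequality) with two new features: the two prox inequalities at consecutive indices must be renormalised to share the factor $\lambda_k$ before they can be summed (this is where $\theta_k=\phi\lambda_k/\lambda_{k-1}$ enters the estimate), and the Lipschitz-type cross term is controlled using the adaptive step-size bound \eqref{eq:inv loc lip} rather than a global Lipschitz constant.

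First, I would apply Proposition~\ref{prop:prox}\ref{item:prox thm} to the prox update \eqref{eq:z} at step $k$ with an arbitrary test point $z$, and again at step $k-1$ with $z=z_{k+1}$. Using the mixing identity $\phi\nabla h(\overline{z}_k)=(\phi-1)\nabla h(z_k)+\nabla h(\overline{z}_{k-1})$ together with the three-point identity (Proposition~\ref{prop:prop}\ref{item:3}), the shifted inequality can be rewritten with right-hand side $\phi\bigl[D_h(z_{k+1},\overline{z}_k)-D_h(z_{k+1},z_k)-D_h(z_k,\overline{z}_k)\bigr]$. Multiplying this shifted inequality by $\lambda_k/\lambda_{k-1}$ (which turns the factor $\phi$ into $\theta_k$), setting $z=z^*$ in the step-$k$ inequality, and summing the two, the left-hand side becomes $\lambda_k\bigl[\langle F(z_k),z_k-z^*\rangle+g(z_k)-g(z^*)\bigr]+\lambda_k\langle F(z_k)-F(z_{k-1}),z_{k+1}-z_k\rangle$; the first summand is non-negative by the VI at $z^*$ together with monotonicity of $F$ (Assumption~\ref{ass:F}).

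Next I would handle the residual cross term using Cauchy--Schwarz, the step-size bound \eqref{eq:inv loc lip}, AM-GM, and $\sigma$-strong convexity of $h$, to reach
$$\lambda_k\langle F(z_k)-F(z_{k-1}),z_k-z_{k+1}\rangle \leq \tfrac{\theta_{k-1}}{2}D_h(z_k,z_{k-1})+\tfrac{\theta_k}{2}D_h(z_{k+1},z_k).$$
Assembling the pieces yields the intermediate inequality
$$D_h(z^*,z_{k+1}) \leq D_h(z^*,\overline{z}_k)+(\theta_k-1)D_h(z_{k+1},\overline{z}_k)-\theta_k D_h(z_k,\overline{z}_k)-\tfrac{\theta_k}{2}D_h(z_{k+1},z_k)+\tfrac{\theta_{k-1}}{2}D_h(z_k,z_{k-1}).$$
To convert the anchor from $z_{k+1}$ to $\overline{z}_{k+1}$, I apply Proposition~\ref{prop:prop}\ref{item:identity} with $\nabla h(z_{k+1})=\tfrac{\phi}{\phi-1}\nabla h(\overline{z}_{k+1})-\tfrac{1}{\phi-1}\nabla h(\overline{z}_k)$, obtained by inverting \eqref{eq:bar} at index $k+1$. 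Substituting the intermediate bound and collecting like terms produces almost the target, except with coefficient $\tfrac{\phi}{\phi-1}$ (not $1$) in front of $D_h(z_{k+1},\overline{z}_{k+1})$ on the left and with residual coefficient $\theta_k-1-\tfrac{1}{\phi-1}$ on $D_h(z_{k+1},\overline{z}_k)$.

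The final bookkeeping step is to split $\tfrac{\phi}{\phi-1}=1+\tfrac{1}{\phi-1}$, move the excess $\tfrac{1}{\phi-1}D_h(z_{k+1},\overline{z}_{k+1})$ to the right-hand side, and bound it by $\tfrac{1}{\phi(\phi-1)}D_h(z_{k+1},\overline{z}_k)$ using the one-step inequality $D_h(z_{k+1},\overline{z}_{k+1}) \leq \tfrac{1}{\phi}D_h(z_{k+1},\overline{z}_k)$, which is proved exactly as in \eqref{eq:distance} via Proposition~\ref{prop:prop}\ref{item:identity} applied to $\nabla h(\overline{z}_{k+1})=\tfrac{\phi-1}{\phi}\nabla h(z_{k+1})+\tfrac{1}{\phi}\nabla h(\overline{z}_k)$. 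The coefficient of $D_h(z_{k+1},\overline{z}_k)$ then simplifies as $\theta_k-1-\tfrac{1}{\phi-1}+\tfrac{1}{\phi(\phi-1)}=\theta_k-1-\tfrac{1}{\phi}$, matching the claim. The main obstacle is precisely this coefficient bookkeeping: in the fixed-step proof the identity $\varphi^2=\varphi+1$ collapses $\tfrac{\phi}{\phi-1}$ to $\varphi+1$ and $\tfrac{1}{\phi-1}$ to $\varphi$, whereas here $\phi\in(1,\varphi]$ is a free parameter and every coefficient must be tracked in raw form. A secondary subtlety is in the cross-term estimate, where the AM-GM must be grouped so that the factors of $\sigma$ and of $\theta_k,\theta_{k-1}$ interact correctly with $D_h\geq\tfrac{\sigma}{2}\|\cdot\|^2$.
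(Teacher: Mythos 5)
Your proposal is correct and follows essentially the same route as the paper's proof: the same two prox inequalities with the index-shifted one rescaled by $\lambda_k/\lambda_{k-1}$ to introduce $\theta_k$, the same cross-term estimate via \eqref{eq:inv loc lip} and $\sigma$-strong convexity, the same anchor change via Proposition~\ref{prop:prop}\ref{item:identity}, and the same final absorption of the excess $\tfrac{1}{\phi-1}D_h(z_{k+1},\overline{z}_{k+1})$ using $D_h(z_{k+1},\overline{z}_{k+1})\leq\tfrac{1}{\phi}D_h(z_{k+1},\overline{z}_k)$. The coefficient bookkeeping you describe, including $\theta_k-1-\tfrac{1}{\phi-1}+\tfrac{1}{\phi(\phi-1)}=\theta_k-1-\tfrac{1}{\phi}$, matches the paper's \eqref{eq:ad central}--\eqref{eq:last adp} exactly.
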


\begin{proof}
We proceed in a similar fashion as in the proof to Lemma~\ref{lem:dec}. By first applying Proposition~\ref{prop:prox}\ref{item:prox thm} with $f(z) := \lambda_k(\langle F(z_k), z-z_k\rangle + g(z))$, $u:=z\in\dom h\cap\dom g$ arbitrary, $x:=z_{k+1}$ and $y_0:=\overline{z}_k$, followed by the three-point identity (Proposition~\ref{prop:prop}\ref{item:3}) we obtain
\begin{equation}\label{eq:ad prox-ineq}
\begin{aligned}
\lambda_k\left(\langle F(z_k),z_{k+1}-z\rangle + g(z_{k+1}) - g(z)\right) &\leq \langle \nabla h(\overline{z}_k) - \nabla h(z_{k+1}), z_{k+1} - z\rangle\\
&= D_h(z, \overline{z}_k) - D_h(z, z_{k+1}) - D_h(z_{k+1}, \overline{z}_k).
\end{aligned}
\end{equation}
Shifting the index~\eqref{eq:ad prox-ineq} (by setting $k\equiv k-1$), setting $z:=z_{k+1}$, and using the fact that $\phi\nabla h(\overline{z}_k) = (\phi-1)\nabla h(z_k) + \nabla h(\overline{z}_{k-1})$ followed by the three point identity (Proposition~\ref{prop:prop}\ref{item:3}) gives
\begin{equation}\label{eq:ad shift prox-ineq}
\begin{aligned}
    &\lambda_{k-1}\left(\langle F(z_{k-1}), z_k - z_{k+1}\rangle + g(z_k) - g(z_{k+1})\right) \\
    &\quad \leq \langle \nabla h(\overline{z}_{k-1}) - \nabla h(z_k), z_k - z_{k+1}\rangle\\
   &\quad= \phi\langle\nabla h(\overline{z}_k) - \nabla h(z_k), z_k - z_{k+1}\rangle \\
   &\quad = \phi\left[D_h(z_{k+1},\bar{z}_k)-D_h(z_{k+1},z_k)-D_h(z_k,\bar{z}_k)\right].
   \end{aligned}
   \end{equation}
Now multiplying both sides of~\eqref{eq:ad shift prox-ineq} by $\tfrac{\lambda_k}{\lambda_{k-1}}$ then gives
\begin{multline}\label{eq:ad shift prox-ineq2}
 \lambda_k\left(\langle F(z_{k-1}), z_k - z_{k+1}\rangle + g(z_k) - g(z_{k+1})\right) \\ \leq\theta_k\left[D_h(z_{k+1}, \overline{z}_k) - D_h(z_{k+1}, z_k) - D_h(z_k, \overline{z}_k)\right].
\end{multline}
Let $z^*\in S\cap\dom h$. Setting $z=z^*$ in~\eqref{eq:ad prox-ineq}, summing with~\eqref{eq:ad shift prox-ineq}  and rearranging yields
\begin{equation}\label{eq:ad add prox-ineqs}
    \begin{aligned}
    \lambda_k\left(\langle F(z_k), z_k - z^*\rangle + g(z_k) - g(z^*)\right) \leq & D_h(z^*,\overline{z}_k) - D_h(z^*,z_{k+1}) - D_h(z_{k+1},\overline{z}_k)\\
    &+\theta_k\Big[D_h(z_{k+1},\overline{z}_k) - D_h(z_{k+1},z_k) - D_h(z_k,\overline{z}_k)\Big]\\
    &+\lambda_k\langle F(z_k) - F(z_{k-1}), z_k - z_{k+1}\rangle.
    \end{aligned}
\end{equation}
We observe that the left-side of~\eqref{eq:ad add prox-ineqs} is non-negative as a consequence of~\eqref{eq:VI} and~\ref{ass:F}:
\begin{equation}\label{eq:ad lhs nonneg}
0\leq \langle F(z^*),z_k-z^*\rangle + g(z_k) - g(z^*)\leq \langle F(z_k),z_k-z^*\rangle + g(z_k) - g(z^*).
\end{equation}
To estimate the final term of~\eqref{eq:ad add prox-ineqs} we use the Cauchy--Schwarz inequality, the local Lipschitz estimate~\eqref{eq:inv loc lip}, and $\sigma$-strong convexity of $h$ to obtain
\begin{equation}\label{eq:local lipschitz}
\begin{aligned}
    \lambda_k\langle F(z_k) - F(z_{k-1}), z_k - z_{k+1}\rangle 
    &\leq \frac{\sqrt{\sigma\theta_k\theta_{k-1}}}{2}\|z_k - z_{k-1}\|\|z_k - z_{k+1}\|\\
    &\leq \frac{\sigma\theta_{k-1}}{4}\|z_k - z_{k-1}\|^2+\frac{\sigma\theta_k}{4}\|z_k - z_{k+1}\|^2\\
    &\leq \frac{\theta_{k-1}}{2}D_h(z_k, z_{k-1}) + \frac{\theta_k}{2}D_h(z_{k+1}, z_k).
\end{aligned}
\end{equation}
Combining~\eqref{eq:ad add prox-ineqs}, \eqref{eq:ad lhs nonneg} and~\eqref{eq:local lipschitz} gives
 \begin{multline}\label{eq:ad combination}
         D_h(z^*, z_{k+1}) \leq D_h(z^*, \overline{z}_k) + (\theta_k-1)D_h(z_{k+1}, \overline{z}_k)  - \theta_k D_h(z_k, \overline{z}_k)\\
         - \frac{\theta_k}{2}D_h(z_{k+1}, z_k) + \frac{\theta_{k-1}}{2}D_h(z_k,z_{k-1}).
 \end{multline}
Now applying Proposition~\ref{prop:prop}\ref{item:identity} with $\nabla h(z_{k+1}) = \frac{\phi\nabla h(\overline{z}_{k+1}) - \nabla h(\overline{z}_k)}{\phi-1}$ and rearranging yields
\begin{multline}\label{eq:ad identity}
 \frac{\phi}{\phi-1}D_h(z^*, \overline{z}_{k+1}) = D_h(z^*, z_{k+1})  + \frac{\phi}{\phi-1}D_h(z_{k+1}, \overline{z}_{k+1}) \\ + \frac{1}{\phi-1}\Big[D_h(z^*, \overline{z}_k) - D_h(z_{k+1}, \overline{z}_k)\Big].
 \end{multline}
Combining \eqref{eq:ad combination} and \eqref{eq:ad identity}, followed by collecting like-terms and rearranging, gives
\begin{multline}\label{eq:ad central}
 \frac{\phi}{\phi-1}D_h(z^*, \overline{z}_{k+1}) + \frac{\theta_k}{2}D_h(z_{k+1}, z_k) -  D_h(z_{k+1}, \overline{z}_{k+1}) \\
    \leq \frac{\phi}{\phi-1}D_h(z^*, \overline{z}_k) + \frac{\theta_{k-1}}{2}D_h(z_k, z_{k-1}) 
    - \theta_k D_h(z_k, \overline{z}_k) \\+\left(\theta_k - \frac{\phi}{\phi-1}\right)D_h(z_{k+1}, \overline{z}_k) + \frac{1}{\phi-1}D_h(z_{k+1}, \overline{z}_{k+1}).
\end{multline}
Next we apply Proposition~\ref{prop:prop}\ref{item:identity} once again to see that
\begin{multline}\label{eq:ad distance}
    D_h(z_{k+1},\overline{z}_{k+1})=\frac{\phi-1}{\phi}\left[D_h(z_{k+1},z_{k+1}) - D_h(\overline{z}_{k+1},z_{k+1})\right] + \frac{1}{\phi}\left[D_h(z_{k+1},\overline{z}_k) - D_h(\overline{z}_{k+1},\overline{z}_k)\right]\\
    \leq\frac{1}{\phi}D_h(z_{k+1},\overline{z}_k).
\end{multline}
The final line of~\eqref{eq:ad central} can therefore be estimated as
\begin{equation}\label{eq:last adp}\begin{aligned}
\left(\theta_k - \frac{\phi}{\phi-1}\right)D_h(z_{k+1}, \overline{z}_k) + \frac{1}{\phi-1}D_h(z_{k+1}, \overline{z}_{k+1})
&\leq \left(\theta_k - \frac{\phi}{\phi-1} + \frac{1/\phi}{\phi-1}\right)D_h(z_{k+1}, \overline{z}_k) \\
&= \left(\theta_k -1-\frac{1}{\phi}\right)D_h(z_{k+1}, \overline{z}_k).
\end{aligned}\end{equation}
Substituting \eqref{eq:last adp} into \eqref{eq:ad central} gives \eqref{eq:ad energy}, which completes the proof.
\end{proof}

The following is our main result regarding convergence of the Bregman adaptive GRAAL.
\begin{theorem}\label{th:B-aGRAAL}
 Suppose Assumptions~\ref{ass:g}-\ref{ass:S} hold, and $F$ is locally Lipschitz continuous on the bounded set $\dom g\cap\intr\dom h$. Choose $\phi\in\left(1,\varphi\right)$ and $\rho\in\left[1,\frac{1}{\phi}+\frac{1}{\phi^2}\right)$.
 Then the sequences $(z_k)$ and $(\overline{z}_k)$ generated by Algorithm~\ref{alg:B-aGRAAL} converge to a point in $\Omega$ whenever $\lambda_{\rm max}>0$ is sufficiently small.
 \end{theorem}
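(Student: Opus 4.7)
The plan is to mirror Theorem~\ref{thm:conv}, using Lemma~\ref{lem:adaptive energy} in place of Lemma~\ref{lem:dec}. Introduce the Lyapunov-like sequence
\[
\eta_k := \frac{\phi}{\phi-1}D_h(z^*, \overline{z}_k) + \frac{\theta_{k-1}}{2}D_h(z_k, z_{k-1}) - D_h(z_k, \overline{z}_k),
\]
with respect to which Lemma~\ref{lem:adaptive energy} rearranges as
\[
\eta_{k+1} \leq \eta_k - (\theta_k - 1)D_h(z_k, \overline{z}_k) - \left(1 + \tfrac{1}{\phi} - \theta_k\right)D_h(z_{k+1}, \overline{z}_k).
\]
Since $(z_k)\subseteq\dom g\cap\intr\dom h$ is automatically bounded and $F$ is locally Lipschitz there, Lemma~\ref{lem:bounded} supplies a single Lipschitz constant $L$ valid along the iterates together with the uniform bound $\theta_k\geq \sigma\phi^3/(4L^2\lambda_{\max}^2)$. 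Shrinking $\lambda_{\max}$ if necessary, I arrange both $\theta_k\geq 1$ for all $k$ and $\lambda_{\max}L\leq \sigma\phi/2$. The first forces $\theta_k-1\geq 0$, while $\rho< 1/\phi+1/\phi^2$ combined with $\theta_k\leq\rho\phi$ keeps $1+1/\phi-\theta_k$ uniformly bounded below by a positive constant. Therefore $(\eta_k)$ is non-increasing.

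To obtain a matching lower bound on $\eta_k$, I would adapt the derivation of the first inequality in Lemma~\ref{lem:dec}: combining the three point identity, the prox inequality at step $k-1$ evaluated at $z^*$, the discarding of the $F(z_k)$-term via the variational inequality and monotonicity of $F$, and Cauchy--Schwarz with the estimate $\lambda_{k-1}L\leq\sigma\phi/2$, one obtains
\[
D_h(z_k, \overline{z}_k) \leq D_h(z^*, \overline{z}_k) + \tfrac{1}{2}D_h(z_k, z_{k-1}).
\]
Substituting into $\eta_k$ yields $\eta_k\geq \tfrac{1}{\phi-1}D_h(z^*,\overline{z}_k)+\tfrac{\theta_{k-1}-1}{2}D_h(z_k,z_{k-1})\geq 0$, so $(\eta_k)$ converges. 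Telescoping the recursion then gives $\sum_k D_h(z_{k+1},\overline{z}_k)<\infty$, hence $D_h(z_{k+1},\overline{z}_k)\to 0$. Proposition~\ref{prop:prop}\ref{item:identity} applied with the identity $\nabla h(\overline{z}_{k+1}) = \frac{(\phi-1)\nabla h(z_{k+1})+\nabla h(\overline{z}_k)}{\phi}$ furnishes $D_h(z_{k+1},\overline{z}_{k+1})\leq\tfrac{1}{\phi}D_h(z_{k+1},\overline{z}_k)\to 0$, and the same identity applied to $\nabla h(z_k)=\frac{\phi}{\phi-1}\nabla h(\overline{z}_k)-\frac{1}{\phi-1}\nabla h(\overline{z}_{k-1})$ produces $D_h(z_{k+1},z_k)\to 0$, exactly as in the final computation of Theorem~\ref{thm:conv}.

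The remainder follows the cluster-point argument of Theorem~\ref{thm:conv}: by $\sigma$-strong convexity, $\|z_{k+1}-\overline{z}_k\|$, $\|z_{k+1}-z_k\|$, and $\|z_k-\overline{z}_k\|$ all tend to zero, so $(z_k)$ and $(\overline{z}_k)$ share cluster points. For any such cluster point $\overline{z}$, rewriting the prox inequality via the three point identity and passing to $\liminf$ along a subsequence, while invoking continuity of $F$, lower-semicontinuity of $g$, and Assumption~\ref{ass:h} to transmit limits of Bregman distances, shows $\overline{z}\in\Omega$. Setting $z^*:=\overline{z}$ in $(\eta_k)$ makes $\eta_{k_j}\to 0$; monotonicity upgrades this to $\eta_k\to 0$, and strong convexity then yields $\overline{z}_k\to\overline{z}$, with $z_k\to\overline{z}$ following from $\|z_k-\overline{z}_k\|\to 0$. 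The main obstacle is calibrating $\lambda_{\max}$ so that $\theta_k\geq 1$ and $\lambda_{\max}L\leq\sigma\phi/2$ hold simultaneously; the boundedness of $\dom g\cap\intr\dom h$ is precisely what makes this possible, as it fixes $L$ independently of $\lambda_{\max}$.
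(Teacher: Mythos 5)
Your proposal is correct and follows the same overall architecture as the paper's proof: the same Lyapunov sequence $\eta_k$, the same consequence $\eta_{k+1}\leq\eta_k-\epsilon D_h(z_{k+1},\overline{z}_k)$ extracted from Lemma~\ref{lem:adaptive energy} once $\lambda_{\max}$ is small enough to force $\theta_k\geq 1$ and $\theta_k\leq\rho\phi<1+\tfrac{1}{\phi}$, the same telescoping, and the same cluster-point argument. The one place you genuinely diverge is the lower bound on $(\eta_k)$. The paper does not prove $\eta_k\geq 0$; instead it bounds the inner product $\lambda_k\langle F(z_k)-F(z_{k+1}),z^*-z_{k+1}\rangle$ by a constant $M$ (using boundedness of the iterates and the separation of $(\lambda_k)$, $(\theta_k)$ from zero) to conclude only $\eta_{k+1}\geq -M$, which suffices for telescoping. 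You instead transplant the argument behind the first inequality of Lemma~\ref{lem:dec} into the adaptive setting, using the prox inequality defining $z_k$ from $\overline{z}_{k-1}$ together with $\lambda_{k-1}\leq\lambda_{\max}\leq\tfrac{\sigma\phi}{2L}$ to get $D_h(z_k,\overline{z}_k)\leq D_h(z^*,\overline{z}_k)+\tfrac12 D_h(z_k,z_{k-1})$ and hence $\eta_k\geq 0$; this is valid (I checked the index shift and the Cauchy--Schwarz step) and is arguably cleaner. The only cost is the extra constraint $\lambda_{\max}\leq\tfrac{\sigma\phi}{2L}$ alongside $\lambda_{\max}\leq\tfrac{\phi\sqrt{\phi\sigma}}{2L}$; the paper's route needs only the latter, which it notes can be the looser of the two when $\phi>\sigma$ --- but since the theorem only asserts convergence for $\lambda_{\max}$ sufficiently small, both calibrations are admissible. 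One small point worth making explicit in your cluster-point step: you should invoke the lower bound $\lambda_{k_j}\geq\tfrac{\sigma\phi^2}{4L^2\lambda_{\max}}$ from Lemma~\ref{lem:bounded} so that passing to the limit in the prox inequality retains a nontrivial left-hand side.
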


\begin{proof}
Since $\dom g\cap\intr\dom h$ is bounded and $F$ is locally Lipschitz, there exists $L>0$ such that $F$ is $L$-Lipschitz continuous on $\dom g\cap\intr\dom h$. Suppose $\lambda_{\rm max}$ is sufficient small so that $\lambda_{\max}\leq\frac{\phi\sqrt{\phi\sigma}}{2L}$ holds. Then, applying Lemma~\ref{lem:bounded}  gives $\theta_k\geq\frac{\sigma\phi^3}{4L^2\lambda_{\max}^2}\geq 1$ for all $k\in\mathbb{N}$. Also, since $\theta_k \leq \rho\phi < 1+\frac{1}{\phi}$, there exists an $\epsilon>0$ such that $\theta_k-1-\frac{1}{\phi} \leq -\epsilon$ for all $k\in\mathbb{N}$.

Now let $z^*\in\Omega$ be arbitrary and denote by $(\eta_k)$ the sequence
$$\eta_k := \frac{\phi}{\phi-1}D_h(z^*, \overline{z}_k) +\frac{\theta_{k-1}}{2}D_h(z_k, z_{k-1}) - D_h(z_k, \overline{z}_k).$$
Applying Lemma~\ref{lem:adaptive energy} yields
\begin{equation}\label{eq:new energy}
 \eta_{k+1}  \leq \eta_k -\epsilon D_h(z_{k+1},\overline{z}_k).
\end{equation}
Next, we should show that $(\eta_k)$ is bounded below. To this end, using the three point identity (Proposition \ref{prop:prop}\ref{item:3}) followed by \eqref{eq:ad prox-ineq} gives
\begin{equation}\label{eq:last term 1}
\begin{aligned}
      D_h(z^*,z_{k+1}) &+ D_h(z_{k+1},\overline{z}_{k+1}) = D_h(z^*,\overline{z}_{k+1}) + \langle\nabla h(\overline{z}_{k+1}) - \nabla h(z_{k+1}), z^*-z_{k+1}\rangle\\ 
      &= D_h(z^*,\overline{z}_{k+1}) + \frac{1}{\phi}\langle\nabla h(\overline{z}_k) - \nabla h(z_{k+1}), z^*-z_{k+1}\rangle\\ 
      &\leq D_h(z^*, \overline{z}_{k+1}) + \frac{\lambda_k}{\phi}\left(\langle F(z_k), z^* - z_{k+1}\rangle - g(z_{k+1}) + g(z^*)\right)\\
      &= D_h(z^*, \overline{z}_{k+1}) + \frac{\lambda_k}{\phi}\langle F(z_k) - F(z_{k+1}), z^* - z_{k+1}\rangle \\ &\quad +\frac{\lambda_k}{\phi}(\langle F(z_{k+1}), z^* - z_{k+1}\rangle - g(z_{k+1}) + g(z^*)).
\end{aligned}
\end{equation}
Now, as with~\eqref{eq:add prox-ineqs}, the final term in \eqref{eq:last term 1} is non-negative. Since $(\lambda_k)$ and $(\theta_k)$ are bounded and separated from $0$ by Lemma~\ref{lem:bounded}, there exists a constant $M>0$ such that
\begin{equation}\label{eq:bd above}\begin{aligned}
\lambda_k\langle F(z_k) - F(z_{k+1}), z^* - z_{k+1}\rangle
&\leq \frac{\lambda_k}{\lambda_{k+1}}\lambda_{k+1}\|F(z_k) - F(z_{k+1})\|\|z^*-z_{k+1}\| \\
&\leq \frac{\lambda_k}{\lambda_{k+1}}\frac{\sigma\sqrt{\theta_{k+1}\theta_k}}{2}\|z_k-z_{k+1}\|\|z^*-z_{k+1}\|\leq M.
\end{aligned}\end{equation}
Combing \eqref{eq:last term 1} and \eqref{eq:bd above}, noting that $\phi<\frac{\phi}{\phi-1}$, then gives
\begin{equation}\label{eq:bd below}
    D_h(z_{k+1},\overline{z}_{k+1}) \leq D_h(z^*, \overline{z}_{k+1}) + \frac{M}{\phi}\leq\phi D_h(z^*,\overline{z}_{k+1}) + M
    \implies \eta_{k+1} \geq -M,
\end{equation}
which establishes that $(\eta_k)$ is bounded below.

Next, by telescoping the inequality~\eqref{eq:new energy}, deduce that $(\eta_k)$ is bounded and $D_h(z_{k+1},\overline{z}_k)\to 0$ as $k\to\infty$. Referring to~\eqref{eq:ad distance}, it follows that $D_h(z_{k+1},\overline{z}_{k+1})\to 0$. Also, by applying Proposition~\ref{prop:prop}\ref{item:identity} with the identity $\nabla h(z_k) = \frac{\phi\nabla h(\overline{z}_k) - \nabla h(\overline{z}_{k-1})}{\phi-1}$, we obtain
\begin{align*}
    D_h(z_{k+1},z_k) &= \frac{\phi}{\phi-1}\left[D_h(z_{k+1},\overline{z}_k) - D_h(z_k,\overline{z}_k)\right] - \frac{1}{\phi-1}\left[D_h(z_{k+1},\overline{z}_{k-1}) - D_h(z_k,\overline{z}_{k-1})\right]\\
                     &\leq \frac{\phi}{\phi-1}D_h(z_{k+1},\overline{z}_k) + \frac{1}{\phi-1}D_h(z_k,\overline{z}_{k-1}) \to 0.
\end{align*}
Altogether, noting that $(\theta_k)$ is bounded, we deduce that
$$\lim_{k\to\infty}\eta_k = \frac{\phi}{\phi-1}\lim_{k\to\infty}D_h(z^*,\overline{z}_k),$$
and so, in particular, $\lim_{k\to\infty}D_h(z^*,\overline{z}_k)$ exists.

Next, using $\sigma$-strong convexity of $h$, we deduce that $z_{k+1}-\overline{z}_k\to0$. Let $\overline{z}\in\Hilbert$ be a cluster point of the bounded sequence $(\overline{z}_k)$. Then there exists a subsequence $(\overline{z}_{k_j})$ such that $\overline{z}_{k_j}\to \overline{z}$ and $z_{k_j+1}\to \overline{z}$ as $j\to\infty$.
Now recalling~\eqref{eq:ad prox-ineq} gives
$$\lambda_k\left(\langle F(z_{k_j}),z_{{k_j}+1}-z\rangle + g(z_{{k_j}+1}) - g(z)\right) \leq \langle \nabla h(\overline{z}_{k_j}) - \nabla h(z_{{k_j}+1}), z_{{k_j}+1} - z\rangle \quad\forall z\in\Hilbert,$$
and taking the limit-infimum of both sides as $j\to\infty$ shows that $\overline{z}\in\Omega$. Since $z^*\in\Omega$ was chosen in Lemma~\ref{lem:adaptive energy} to be arbitrary, we can now set $z^*=\overline{z}$. It then follows that $\lim_{j\to\infty}D_h(z^*,\overline{z}_{k_j}) = 0$, and consequently, $\lim_{j\to\infty}\eta_{k_j} = 0$. Also note that for $n\geq k_j$, we have $\eta_n\leq\eta_{k_j}$ from Lemma \ref{lem:dec}, and therefore
$$\frac{\phi}{\phi-1}\lim_{n\to\infty}D_h(z^*, \overline{z}_n) = \lim_{n\to\infty}\eta_n \leq \lim_{j\to\infty}\eta_{k_j} = 0,$$
and $\overline{z}_k\to z^*$ from strong convexity. The fact that $z_k\to z^*$ follows since $z_k-\overline{z}_k\to 0$.
\end{proof}

\begin{remark}
The energy~\eqref{eq:new energy} used in the proof Theorem~\ref{th:B-aGRAAL} is not completely analogous to the one used in the fixed step-size case given in Lemma~\ref{lem:dec}. Notably, the coefficient of the final term of $\eta_k$ in \eqref{eq:new energy} has coefficient $-1$ whereas the corresponding term in Lemma~\ref{lem:dec} has coefficient $-\varphi$. Although it is unlikely to be useful in practice, another interesting feature of the proof it that it requires the maximum step-size to satisfy the upper bound $$\lambda_{\rm max}\leq \frac{\phi\sqrt{\phi\sigma}}{2L} = \sqrt{\frac{\phi}{\sigma}}\frac{\sigma\phi}{2L}. $$
Note that, when $\phi>\sigma$, this upper bound is looser than the upper-bound of $\lambda<\frac{\sigma\phi}{2L}$ required for B-GRAAL (Algorithm~\ref{alg:B-GRAAL}). Thus, it is possible for maximum step-size of B-aGRAAL to be larger than the step-size required for B-GRAAL. This is the case, for instance, if $D_h$ is the KL divergence on the simplex in which case $\sigma=1<\phi$.

Also, the proof of Theorem~\ref{th:B-aGRAAL} required $L$ be a Lipschitz constant for $F$. However, thanks to Lemma~\ref{lem:bounded}, it would have sufficed for $L$ to satisfy the weaker Lipschitz-like inequality $\|F(z_k)-F(z_{k-1})\|\leq L\|z_k-z_{k-1}\|$ for all $k\in\mathbb{N}$. In turn, this would allow for a greater upper bound for $\lambda_{\rm max}$ which is inversely related to $L$.
\end{remark}

\section{Numerical Experiments}\label{sec:exp}
In this section, we present some experimental results for Algorithms~\ref{alg:B-GRAAL} and \ref{alg:B-aGRAAL}, to compare the respective original Euclidean methods against our new methods with respect to various choices of Bregman distances. All experiments are run in Python 3 on a Windows 10 machine with 8GB memory and an Intel(R) Core(TM) i7-10510U CPU @ 1.80GHz processor.

We consider three different problems, all of which can formulated as the variation inequality~\eqref{eq:VI2} for different choices of the function $g$ and the operator $F$. Note that the solutions of the variational inequality~\eqref{eq:VI2} can also be characterised as the monotone inclusion
$$\text{find~}z^*\in\Hilbert\text{~such that~}0\in F(z^*) + \partial g(z^*).$$
Thus, by noting that \eqref{eq:z} implies $0 \in F(z_k) + \partial g(z_{k+1}) + \frac{1}{\lambda_k}\left(\nabla h(z_{k+1}) - \nabla h(\overline{z}_k)\right)$, we monitor the quantity $J_k$ given by
\begin{equation}\label{eq:Jk}
J_k := \frac{1}{\lambda_k}\left(\nabla h(\overline{z}_k) - \nabla h(z_{k+1})\right) + F(z_{k+1}) - F(z_k) \in F(z_{k+1}) + \partial g(z_{k+1})
\end{equation}
as a natural residual for Algorithm~\ref{alg:B-aGRAAL}. The analogous expression for Algorithm~\ref{alg:B-GRAAL} is given by replacing $\lambda_k$ in \eqref{eq:Jk} with $\lambda$.

In all experiments, we run each algorithm on for the same (fixed) number of iterations on $10$ random instances of the chosen problem. The figures in each section show the decrease in residual over time, with each faint line representing one instance and the bold line showing the mean behaviour. We use the parameters $\phi=1.5,\lambda_{\max}=10^6$ throughout, and initial step-size and iterates as described in each section.

\subsection{Matrix Games}\label{subsec:matrix}
To test the fixed step-size B-GRAAL (Algorithm~\ref{alg:B-GRAAL}), we first consider the following matrix game between two players
\begin{equation}\label{eq:matrix}
\min_{x\in\Delta^n}\max_{y\in\Delta^n}\langle Mx,y\rangle,
\end{equation}
where $\Delta^n := \{x\in\mathbb{R}^n_+\colon \sum_{i=1}^n x_i = 1\}$ denotes the unit simplex and $M\in\mathbb{R}^{n\times n}$ a given matrix. This problem is of the form specified in \eqref{eq:saddle} and so can be formulated as the variational inequality~\eqref{eq:VI}.

In particular, we consider the specific problem in the form of \eqref{eq:matrix} of placing a server on a network $G = (V,E)$ with $n$ vertices in a way that minimises its response time. In this problem, a request will originate at some vertex $v_j\in V$, which is not known ahead of time, and the objective is to place the server at a vertex $v_i\in V$ such the response time, as measured by the graphical distance $d(v_i,v_j)$, is minimised. We consider the case where the request location $v_j\in V$ is a decision made by an adversary. The decision variable $x\in\Delta^n$ (resp.\ $y\in\Delta^n$) models mixed strategies for placement of the server (resp.\ request origin). In other words, $x_t$ (resp.\ $y_t$) is the probability of the server (resp.\ request origin) being located at the node $v_t$ for $t=1,\dots,n$. The matrix $M$ is taken to be the distance matrix of the graph $G$, that is $M_{ij} = d(v_i,v_j)$ for all vertices $v_i,v_j\in V$. In this way, the objective function in~\eqref{eq:matrix} measures the expected response time, which we would like to minimise while our adversary seeks to maximise it.

We compare Algorithm~\ref{alg:B-GRAAL} with the squared norm $h(z) = \frac{1}{2}\|z\|^2$, which generates the squared Euclidean distance $D_h(u,v) = \frac{1}{2}\|u-v\|^2$, and the negative entropy $h(z) = \sum_{i=1}^{2n} z_i\log z_i$, which generates the KL divergence $D_h(u,v) = \sum_{i=1}^{2n}u_i\log\frac{u_i}{v_i} + v_i - u_i$, where $z=(x,y)\in\mathbb{R}^{2n}$ is the concatenation of the two vectors $x$ and $y$. Both of these choices for $h$ are $1$-strongly convex on $\Delta^n$. A potential advantage of the the KL projection onto the simplex is that it has the simple closed-formed expression $x\mapsto\frac{x}{\|x\|_1}$. whereas the Euclidean projection has no closed form and takes $O(n\log n)$ time (see, for instance \cite{wang2012projection, chen2011projection, dai2022distributed}).  We run two experiments with $n=500$ and $n=1000$, and the results are shown respectively in Figures~\ref{fig:matrix500} and~\ref{fig:matrix1000}. Initial points are chosen as $x_0=y_0=\left(\frac{1}{n},\dots,\frac{1}{n}\right)\in\Delta^n$, and $z_0=(x_0,y_0)$, then $\overline{z}_0$ as a random perturbation of $z_0$. The Lipschitz constant of $F$ is computed as $L=\|M\|_2$, and the algorithm step-size is taken as $\lambda=\frac{\varphi}{2L}$. Under these conditions, convergence to a solution is guranteeed by Theorem~\ref{thm:conv}.

The residual, given by $\min_{i\leq k}\|J_i\|^2$, is shown in Figures~\ref{sfig:500} and \ref{sfig:1000}. The time per iteration is also shown in Figures~\ref{sfig:500time} and \ref{sfig:1000time}. Despite the KL projection being faster than the Euclidean projection, overall, the Euclidean method performed better in this instance.

We now move onto experiments for the adaptive Algorithm~\ref{alg:B-aGRAAL}.

\begin{figure*}[t!]
    \centering
    \begin{subfigure}[t]{0.5\textwidth}
        \centering
        \includegraphics[width=\textwidth]{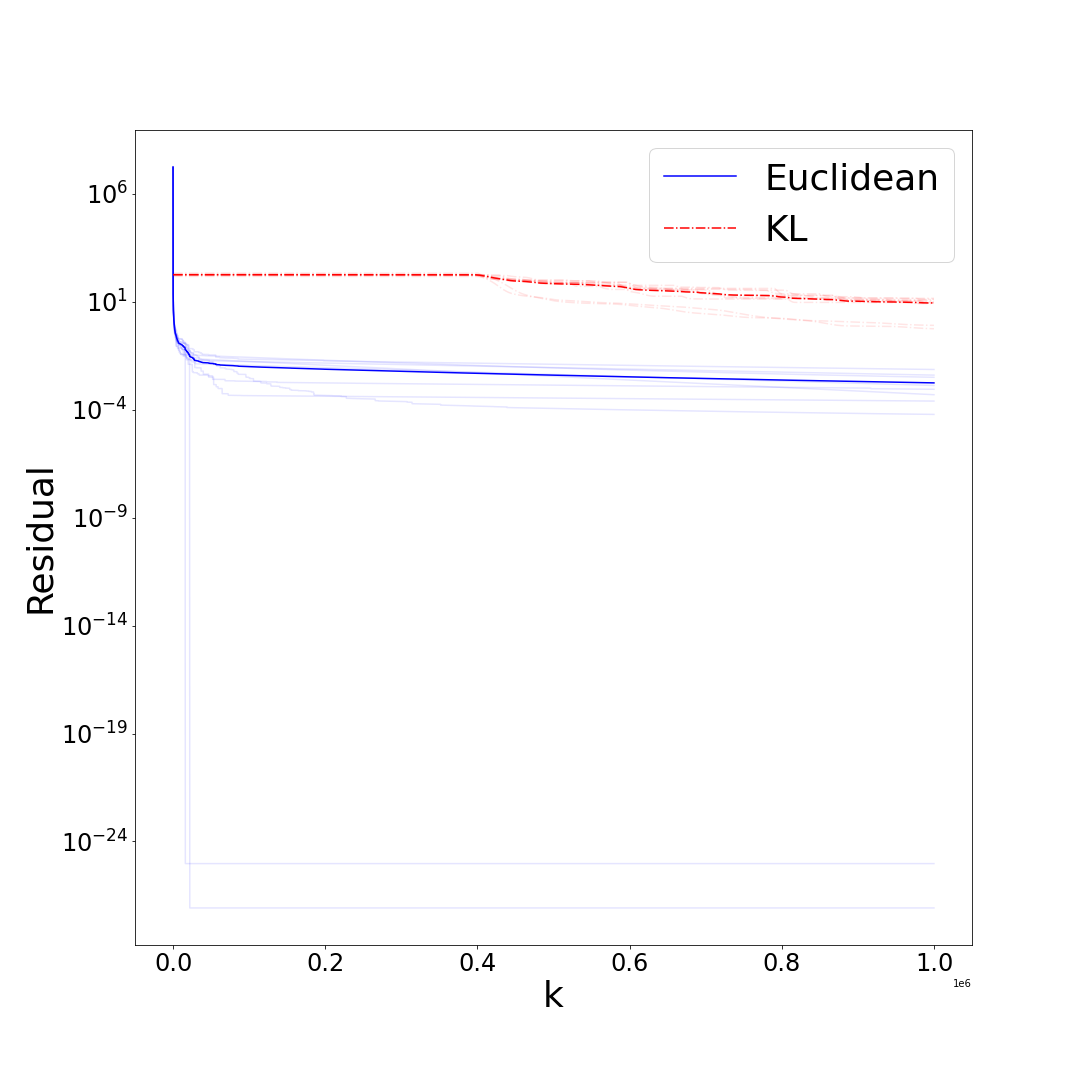}
        \caption{Residual over iterations}
        \label{sfig:500}
    \end{subfigure}%
    ~ 
    \begin{subfigure}[t]{0.5\textwidth}
        \centering
        \includegraphics[width=\textwidth]{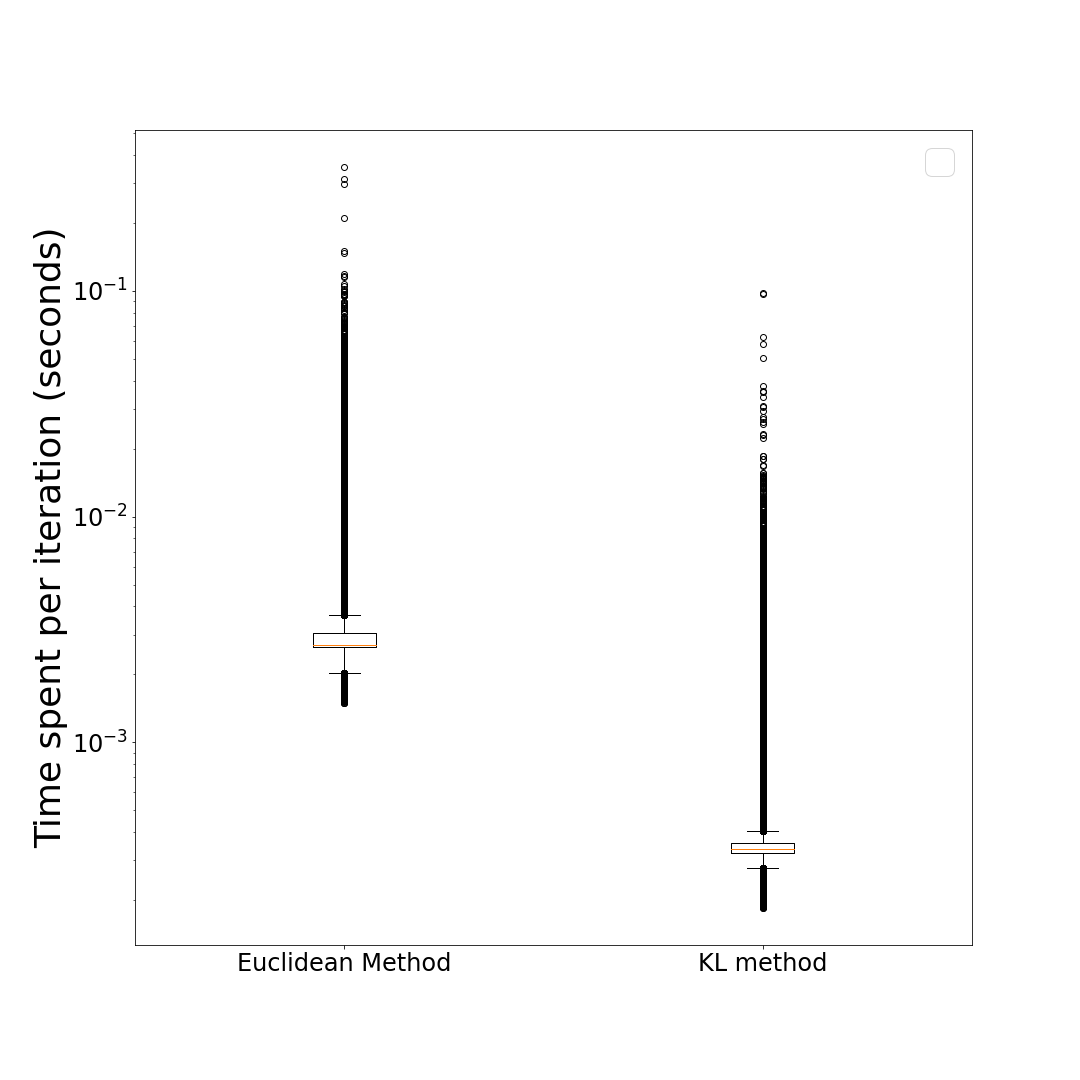}
        \caption{Run time per iteration}
        \label{sfig:500time}
    \end{subfigure}
    \caption{Matrix game results for $n=500$}
    \label{fig:matrix500}
\end{figure*}
\begin{figure*}[t!]
    \centering
    \begin{subfigure}[t]{0.5\textwidth}
        \centering
        \includegraphics[width=\textwidth]{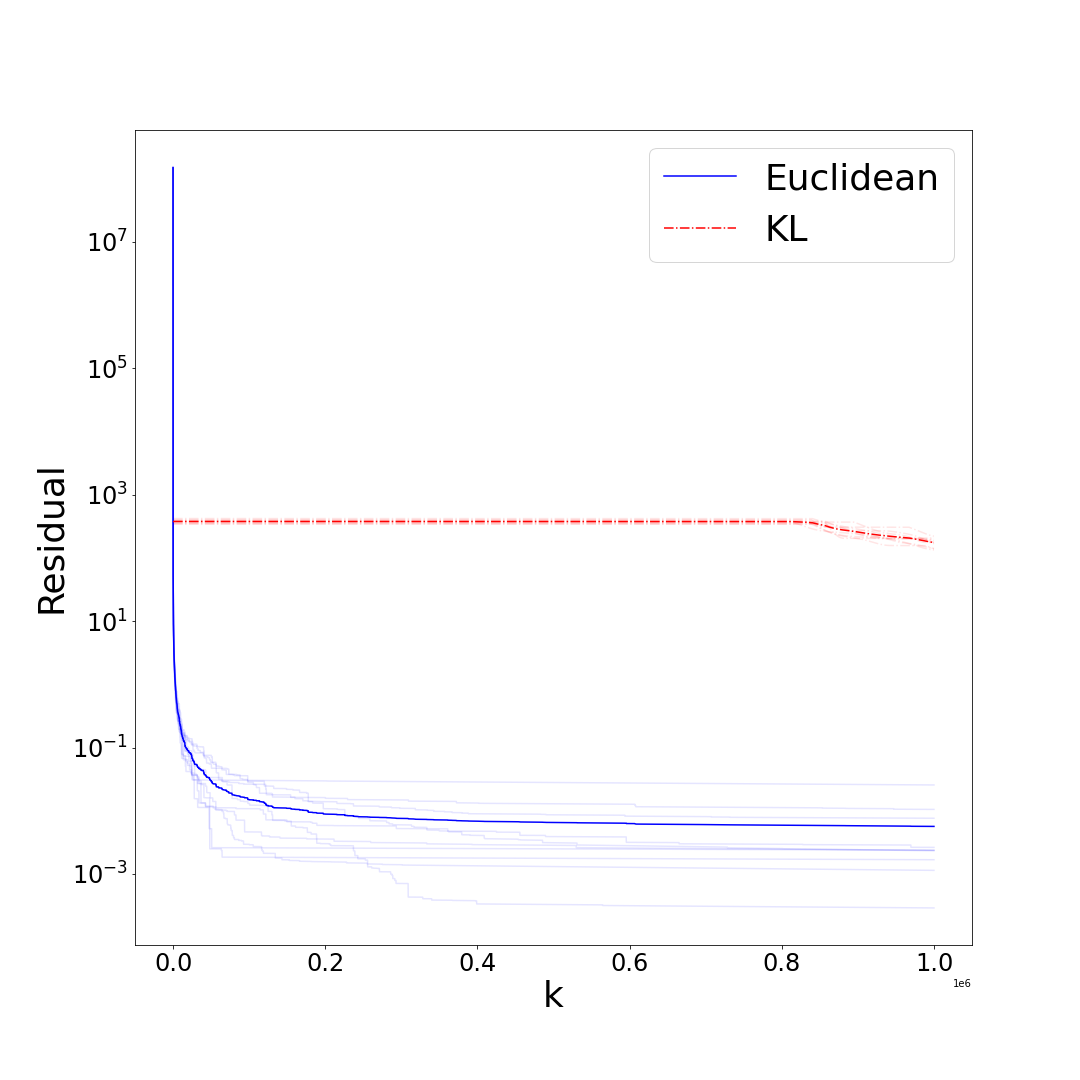}
        \caption{Residual over iterations}
        \label{sfig:1000}
    \end{subfigure}%
    ~ 
    \begin{subfigure}[t]{0.5\textwidth}
        \centering
        \includegraphics[width=\textwidth]{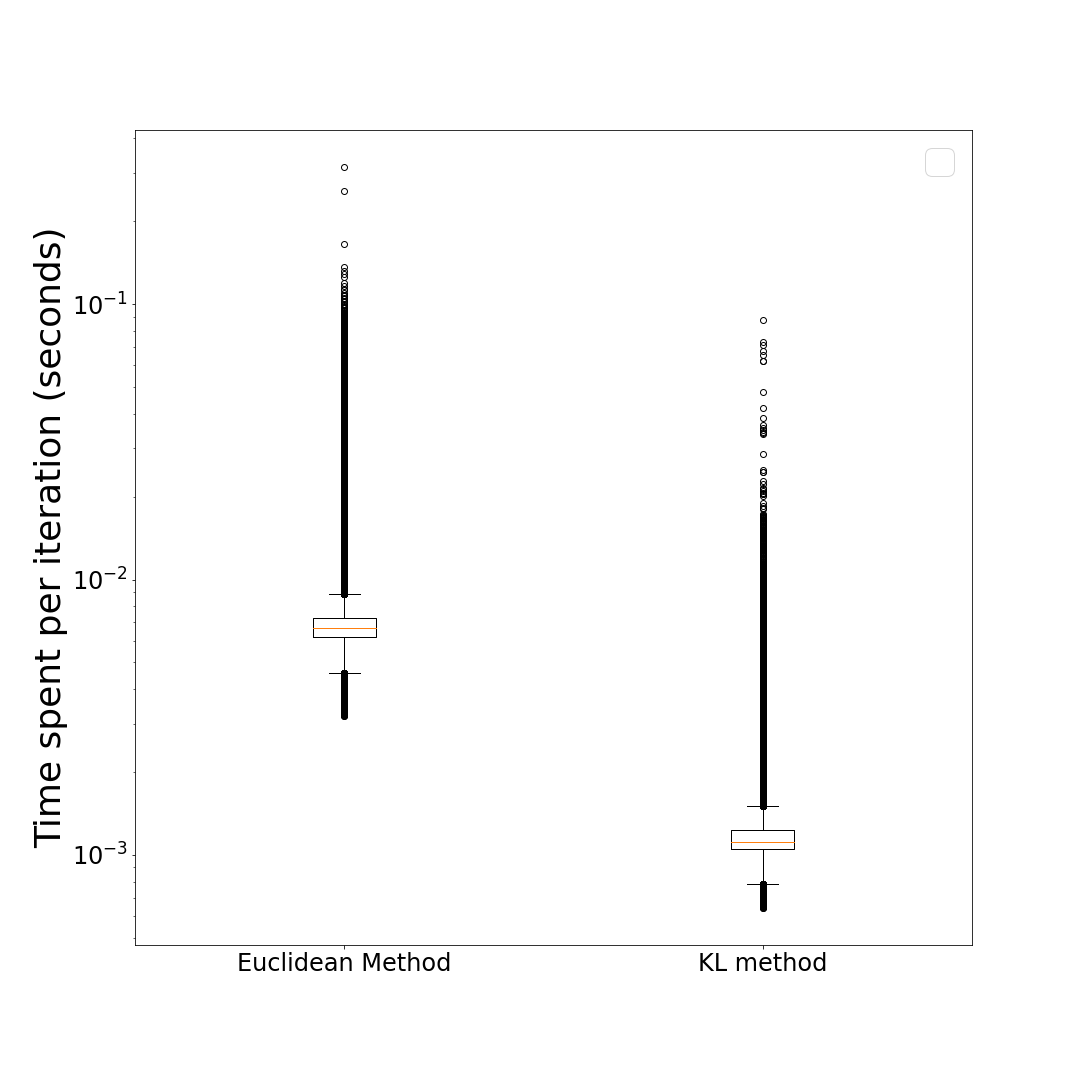}
        \caption{Run time per iteration}
        \label{sfig:1000time}
    \end{subfigure}
    \caption{Matrix game results for $n=1000$}
    \label{fig:matrix1000}
\end{figure*}

\subsection{Gaussian Communication}
We now turn our attention to maximising the information capacity of a noisy \emph{Gaussian communication channel}~\cite[Chapter 9]{thomas2006elements}. In this problem, the goal is to allocate a total power of $P$ across $m$ channels, represented by $p\in\mathbb{R}^m_+$, to maximise the total information capacity of the channels in the presence of allocated noise, represented by $n\in\mathbb{R}^m_+$. The information capacity of the $i$th channel, denoted $C_i(p_i,n_i)$, is the function of power $p_i$ and noise level $n_i$ and is given by
$$C_i(p_i,n_i) = \log\left(1+\frac{\beta_i p_i}{\mu_i+n_i}\right),$$
where $\mu_i>0$ and $\beta_i>0$ are given constants.

Assuming a total power level of $P$ and a total noise level of $N$, optimising for the worst-case scenario by treating the noise allocation as an adversary gives the convex-concave game
$$\max_{p\in\Delta^m_P}\min_{n\in\Delta^m_N} \sum_{i=1}^m C_i(p_i,n_i),$$
where $\Delta_T^n := \{x\in\mathbb{R}^n_+\colon \sum_{i=1}^n x_i = T\}$ is a \textit{scaled} simplex. This problem is also of the form specified in \eqref{eq:saddle} and so can also be formulated as the variational inequality~\eqref{eq:VI}.

The Lipschitz constant of the operator $F$ for this problem is not straightforward to compute, so we apply the adaptive algorithms. Similarly to Section~\ref{subsec:matrix}, we compare the Euclidean and KL versions of Algorithm~\ref{alg:B-aGRAAL}. Since $x\mapsto x\log x$ is $\frac{1}{M}$-strongly convex for $0<x\leq M$, the strong convexity constant of the negative entropy over $\Delta^m_P\times\Delta^m_N$ is $\min\left\{\frac{1}{P},\frac{1}{N}\right\}$. In our experiments, we set $(P,N)=(500,50)$ and generate $\beta\in(0,P]^m$ and $\mu\in(1,N+1]^m$ uniformly. The initial points are chosen as $p_0=\left(\frac{P}{m},\dots,\frac{P}{m}\right)\in\Delta^m_P,n_0=\left(\frac{N}{m},\dots,\frac{N}{M}\right)\in\Delta^m_N$, and $z_0=(p_0,n_0)$, and the initial step-size is taken as $\lambda_0=\frac{\|z_0-\overline{z}_0\|^2}{\|F(z_0)-F(\overline{z}_0)\|^2}$ where $\overline{z}_0$ is again a small random perturbation of $z_0$. It is also worth noting that for the KL version, we had to multiply $\lambda_0$ by a small constant ($10^{-2}$) to avoid numerical instability issues. 

\begin{figure*}[t!]
    \centering
    \begin{subfigure}[t]{0.5\textwidth}
        \centering
        \includegraphics[width=\textwidth]{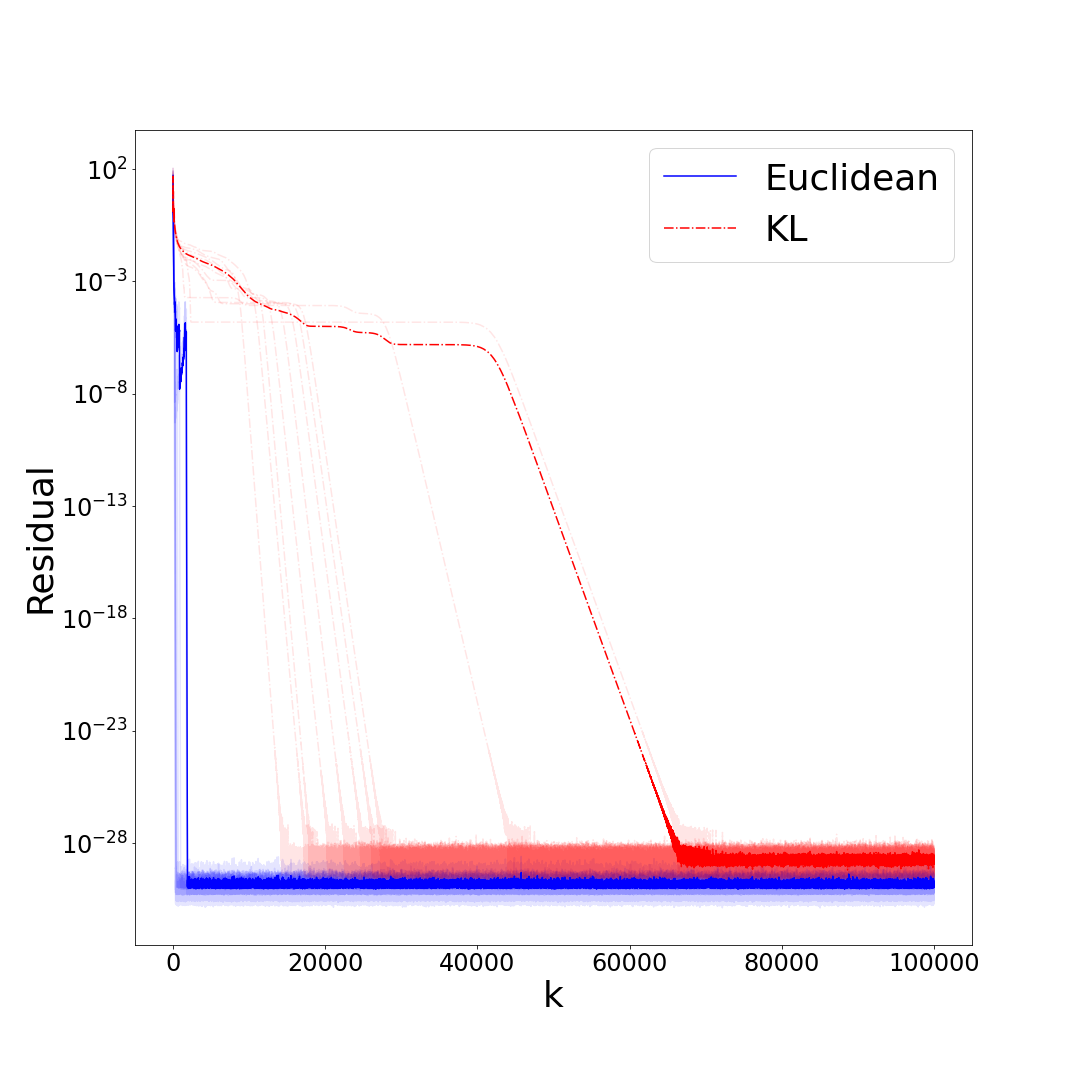}
        \caption{Residual over iterations}
        \label{sfig:100}
    \end{subfigure}%
    ~ 
    \begin{subfigure}[t]{0.5\textwidth}
        \centering
        \includegraphics[width=\textwidth]{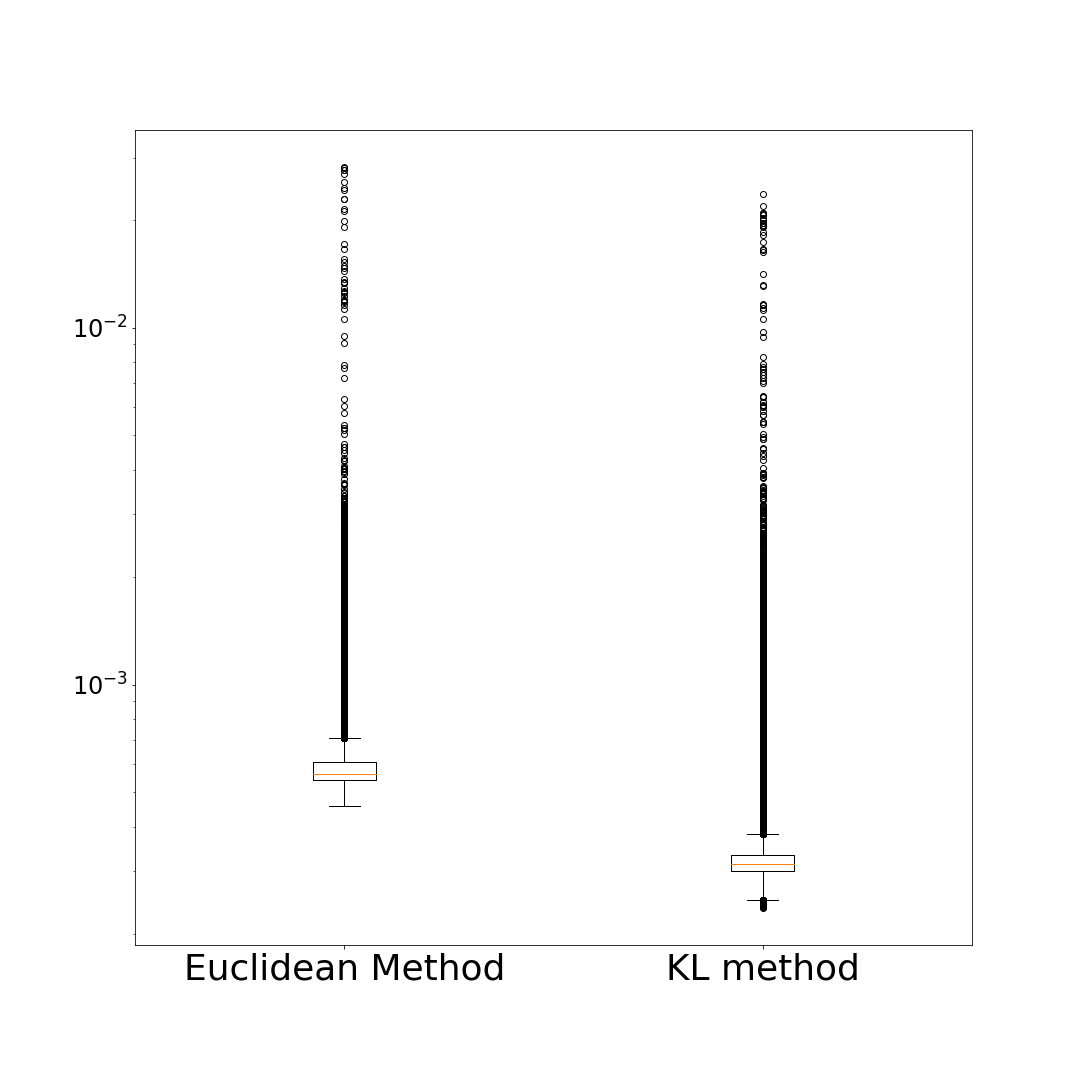}
        \caption{Run time per iteration}
        \label{sfig:100time}
    \end{subfigure}
    \caption{Gaussian communication channel results for $m=100$}
    \label{fig:gauss100}
\end{figure*}
\begin{figure*}[t!]
    \centering
    \begin{subfigure}[t]{0.5\textwidth}
        \centering
        \includegraphics[width=\textwidth]{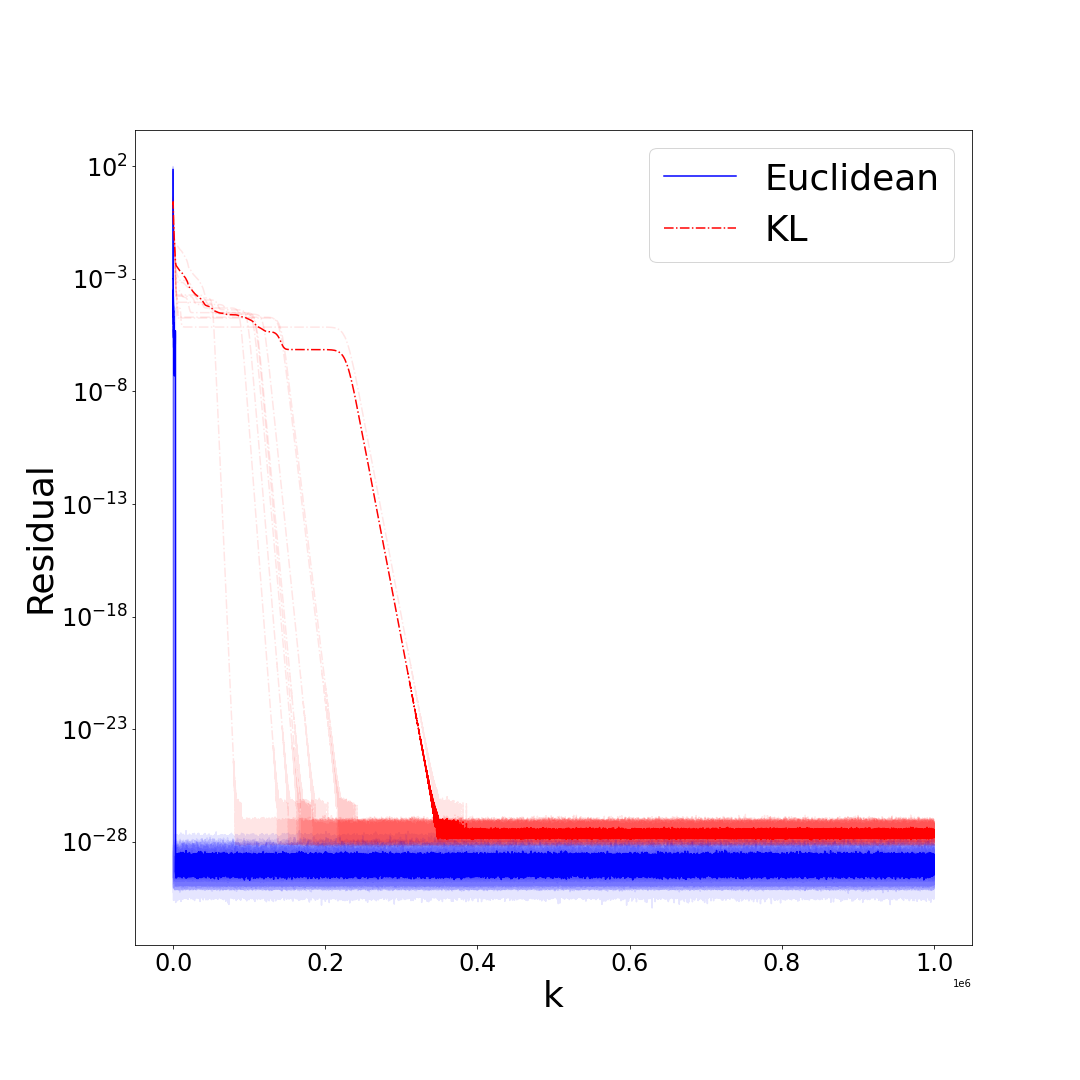}
        \caption{Residual over iterations}
        \label{sfig:200}
    \end{subfigure}%
    ~ 
    \begin{subfigure}[t]{0.5\textwidth}
        \centering
        \includegraphics[width=\textwidth]{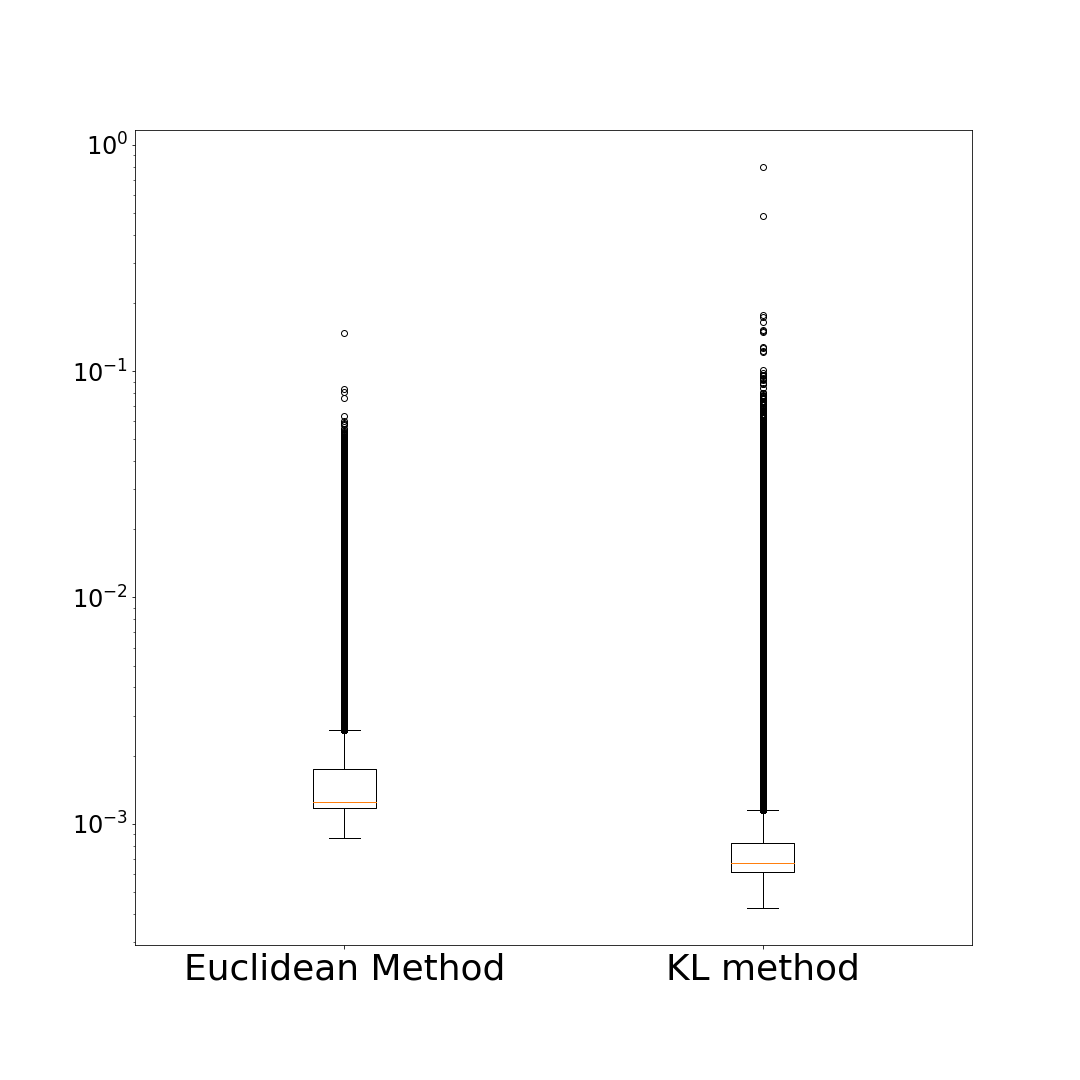}
        \caption{Run time per iteration}
        \label{sfig:200time}
    \end{subfigure}
    \caption{Gaussian communication channel results for $m=200$}
    \label{fig:gauss200}
\end{figure*}

We run two experiments, for $m=100$ and $m=200$, and plot the results in Figures~\ref{fig:gauss100} and~\ref{fig:gauss200} respectively. The KL method is slower than the Euclidean method in terms of the number of iterations and time. However, unlike in the previous section, both methods reach a similar final accuracy.

\subsection{Cournot Completion}
Our final example is a standard $N$-player Cournot oligopoly model \cite[Example 2.1]{bravo2018bandit}. This is a system in which $N$ independent firms supply the market with a quantity of some common good or service. More formally, each firm seeks to maximise their utility subject to their capacity, that is,
\begin{equation}\label{eq:utility}
\displaystyle{\max_{0\leq x_i\leq C_i}} u_i(x_i,x_{-i}) = x_i P(x_T) - c_i x_i,
\end{equation}
where
\begin{itemize}
    \item $x_i\geq 0$ is the quantity of the good supplied by the $i$th firm, $i=1,\dots,N$.
    \item $x_{-i} = (x_1,\dots,x_{i-1},x_{i+1},\dots,x_N)$ is the quantity of the good supplied by all other firms.
    \item $x_T := \sum_{i=1}^N x_i$ is the total amount supplied.
    \item $C_i>0$ is the production capacity of the $i$th firm.
    \item $c_i>0$ is the production cost of the $i$th firm.
    \item $P\colon\mathbb{R}_+\to\mathbb{R}$ is the inverse demand curve.
\end{itemize}
In this section, we consider solutions to this problem in the sense of \emph{Nash equilibria}, which is equivalent to the variational inequality~\eqref{eq:VI2} with 
$$F = \left(-\frac{\partial u_1}{\partial x_1}, \dots, -\frac{\partial u_N}{\partial x_N}\right),\quad K = \left[0,C_1\right]\times\dots\times\left[0,C_N\right],\quad g = \iota_K.$$
By choosing a function $h$ such that $\dom h=K$, it is possible to implicitly enforce the capacity constraint in this problem and avoid performing projections. Two examples, over a single closed interval $[\alpha,\beta]$, present themselves which satisfy our assumptions:
$$ h_1(x) = (x-\alpha)\log(x-\alpha) + (\beta-x)\log(\beta-x), \quad
    h_2(x) = -\sqrt{(x-\alpha)(\beta-x)}.$$
When $\alpha=0$ and $\beta=1$, $h_1$ is called the \emph{Fermi-Dirac Entropy}, and similarly when $\alpha=-1$ and $\beta=1$, $D_{h_2}$ is called the \emph{Hellinger distance} (\cite[Example 2.2]{bauschke2019linear}, \cite[Example 1]{bauschke2017descent}, \cite[Example 2.1]{teboulle2018simplified}). Then we sum over the independent functions of one variable, with appropriately set intervals $\alpha=0,\beta=C_i$, to create the Bregman distance over the closed box $K$. We also compute the strong convexity constants by minimising $\nabla^2 h_1$ over $(\alpha,\beta)$, which gives $\sigma = \frac{4}{\beta-\alpha}$, and similarly for $h_2$, $\sigma=\frac{2}{\beta-\alpha}$.

In our experiments, $P$ is taken to be a linear inverse demand curve given by $P(x) = a-bx$ for $a,b>0$. All parameters are generated by a log-normal distribution, except the cost vector $c\in\mathbb{R}^N$, which is generated uniformly in $\left[\frac{C_1}{100},\frac{C_1}{5}\right]\times\dots\times\left[\frac{C_N}{100},\frac{C_N}{5}\right]$. We run two experiments, with $N=2000$ and $N=5000$, the results of which are shown in Figures~\ref{fig:nash2000} and~\ref{fig:nash5000}.  The initial points are chosen as $z_0 = \frac{C}{2}$, and $\lambda_0=1$.
\begin{figure*}[t!]
    \centering
    \begin{subfigure}[t]{0.5\textwidth}
        \centering
        \includegraphics[width=\textwidth]{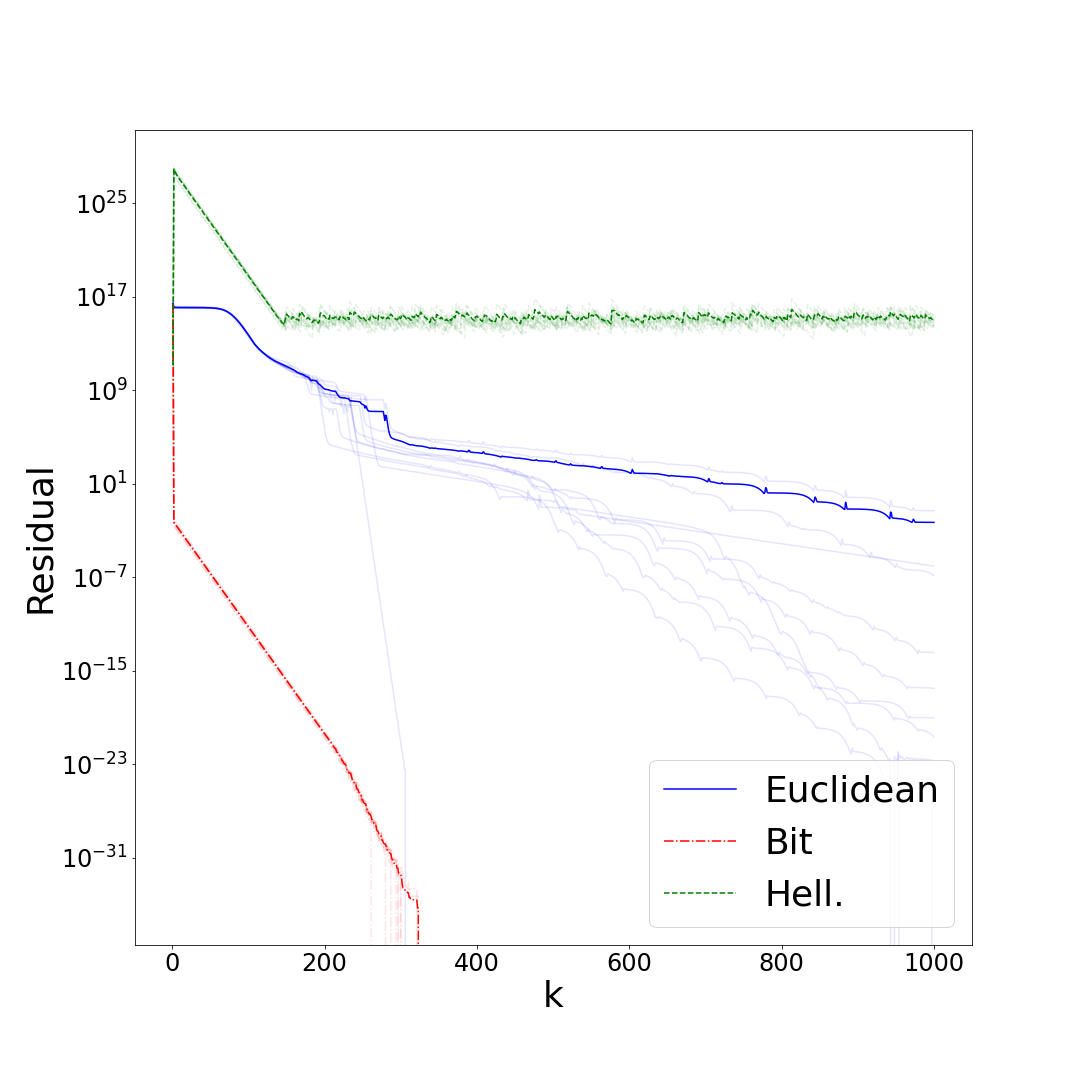}
        \caption{Residual over iterations}
    \end{subfigure}%
    ~ 
    \begin{subfigure}[t]{0.5\textwidth}
        \centering
        \includegraphics[width=\textwidth]{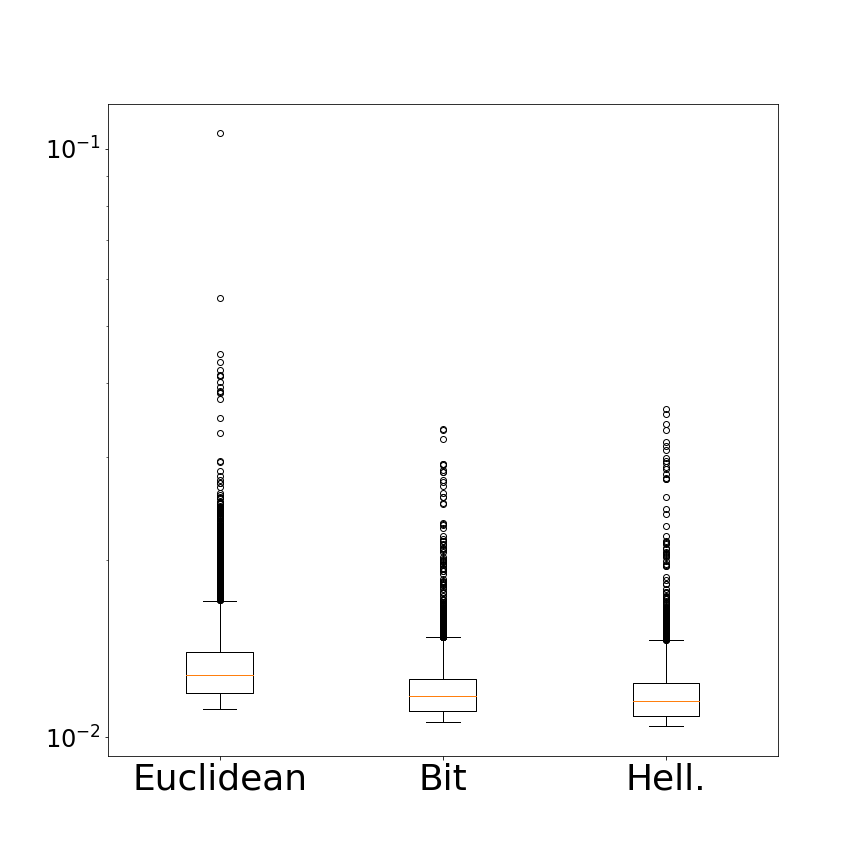}
        \caption{Time per iteration}\label{sfig:2000time}
    \end{subfigure}
    \caption{Results for $N=2000$}
    \label{fig:nash2000}
\end{figure*}
\begin{figure*}[t!]
    \centering
    \begin{subfigure}[t]{0.5\textwidth}
        \centering
        \includegraphics[width=\textwidth]{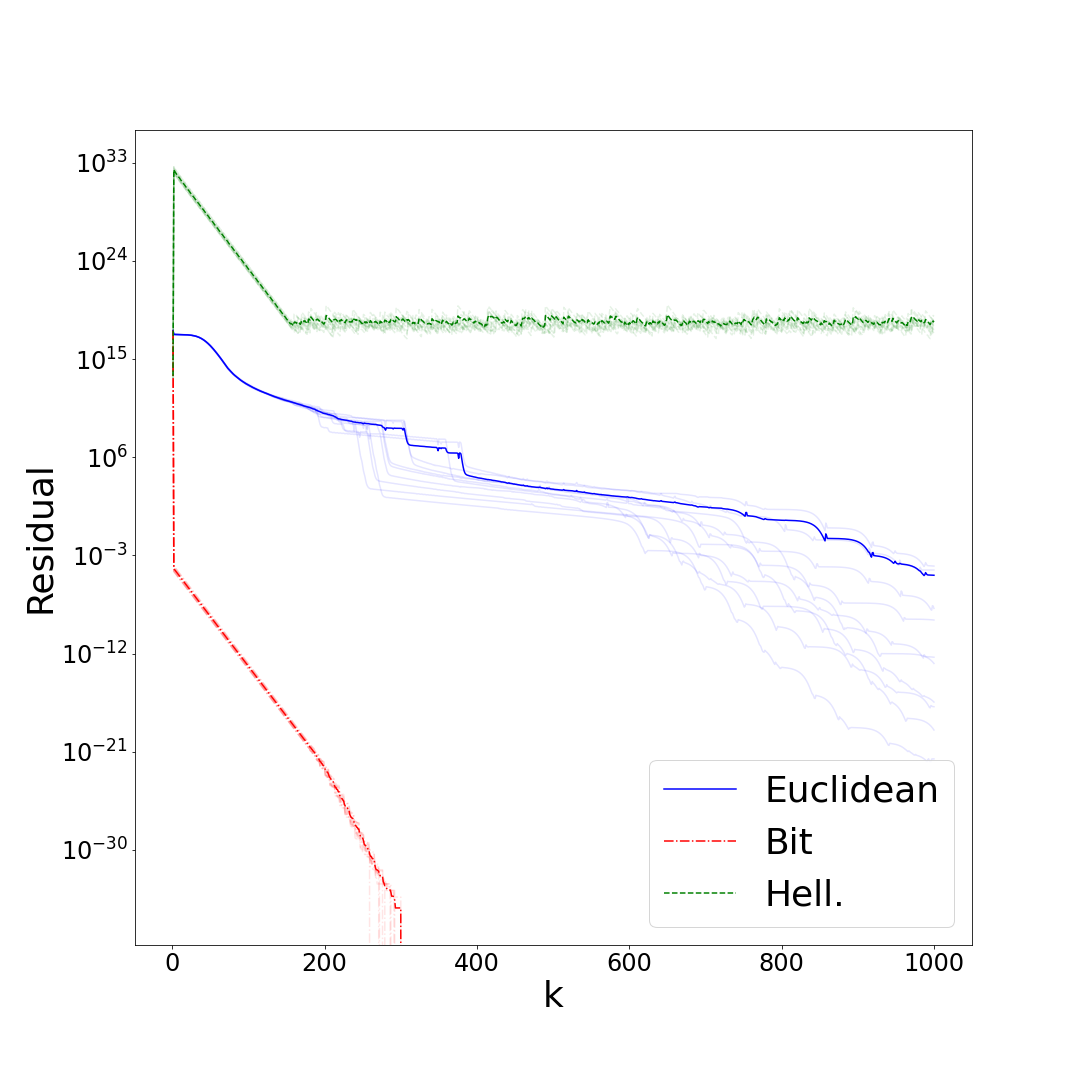}
        \caption{Residual over iterations}
    \end{subfigure}%
    ~ 
    \begin{subfigure}[t]{0.5\textwidth}
        \centering
        \includegraphics[width=\textwidth]{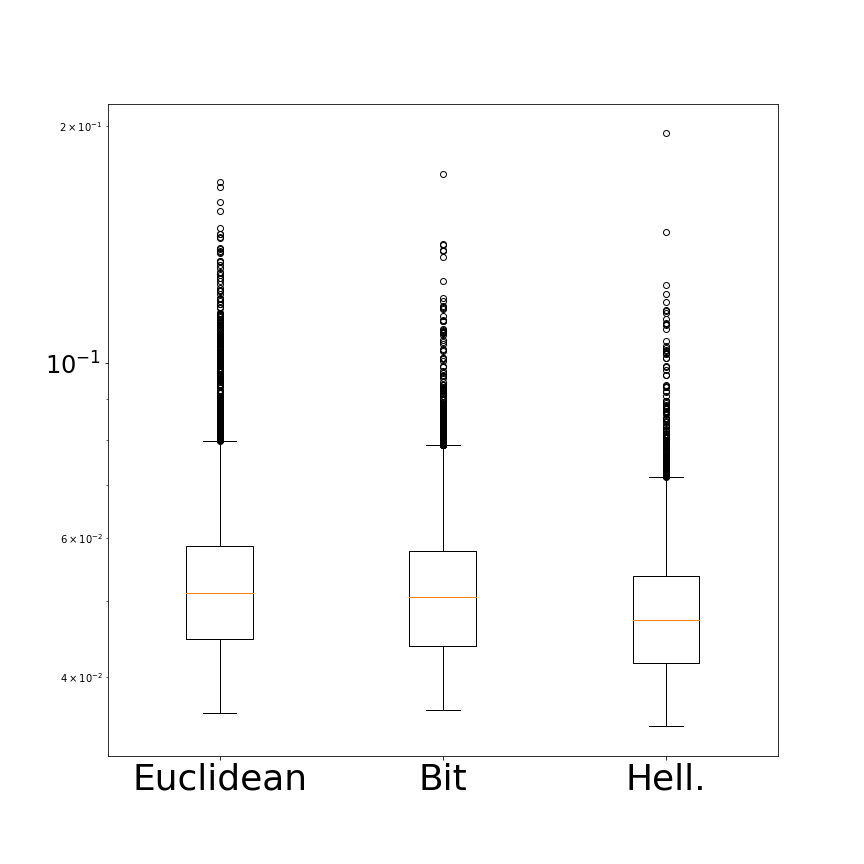}
        \caption{Time per iteration}\label{sfig:5000time}
    \end{subfigure}
    \caption{Results for $N=5000$}
    \label{fig:nash5000}
\end{figure*}

We observe here that the final accuracy depends heavily on the choice of Bregman distance. In both figures, we observe that the Hellinger method is not making any progress after approximately $200$ iterations, while the Euclidean method acheives a modest final accuracy. Meanwhile the Bit method is very fast and accurate, converging to a near $0$ tolerance in roughly $300$ iterations in all instances. Finally, we note that all methods are roughly equal in terms of time per iteration. This is to be expected, since the Euclidean method requires a projection which evaluates $\min\{\max\{x_i,0\}, C_i\}$ for each component, whereas the Bregman methods require evaluating $\nabla h,(\nabla h)^{-1}$ over each component --- all of which are taking $O(N)$ time.

\section{Conclusion}\label{sec:conc}

In this paper, we extended the adaptive method aGRAAL (Algorithm~\ref{alg:aGRAAL}) to the Bregman distance setting. We proposed two such extensions: The first, Algorithm \ref{alg:B-GRAAL}, generalises the fixed step-size GRAAL and converges under the same assumptions for a strongly-convex Bregman function $h\colon\Hilbert\to\mathbb{R}$.  The second, Algorithm \ref{alg:B-aGRAAL}, generalises Algorithm \ref{alg:aGRAAL}, and converges in a more restrictive setting. We first examined the performance of Algorithm~\ref{alg:B-GRAAL}, and found that the KL version is less favourable than the Euclidean version, despite the reduced time per iteration. We then tested Algorithm~\ref{alg:B-aGRAAL} on a convex-concave game for Gaussian communication channels, where our new method performed worse with respect to the KL divergence when compared to the Euclidean method, although the run-time per iteration was again significantly shorter as was expected. Finally, we examined a Cournot completion model, where one of Bregman based methods reached a much higher accuracy very quickly.

We conclude by outlining directions for further research:
\begin{itemize}
    \item It would be interesting to know whether Algorithm~\ref{alg:B-aGRAAL} can be shown to converge in a more general setting than what we have shown, and if so, under what circumstances. The difficulties in our analysis arose from two issues: first, the estimate derived in~\eqref{eq:ad distance} for the Bregman case is weaker than the Euclidean equality $\|z_{k+1}-\overline{z}_{k+1}\|^2 = \frac{1}{\phi^2}\|z_{k+1}-\overline{z}_k\|^2$ used in~\cite{malitsky2020golden}; and second, the inability to bound $\theta_k$ below without additional assumptions (as the bound in Lemma~\ref{lem:bounded} can be arbitrarily small in general).
    \item Throughout this paper, $h$ was assumed strongly convex, but whether or not this assumption can be relaxed is unclear. One potential replacement for strong convexity is considered in \cite{bauschkedykstra, laude2020bregman} where $h$ is twice differentiable such that the Hessian matrix $\nabla^2 h(z)$ is positive definite for all $z\in\intr\dom h$. Within the scope of twice differentiable functions, such a condition lies between strict and strong convexity, the main consequence being that $\nabla h$ is locally Lipschitz and $h$ is locally strongly convex. Indeed, for $\sigma$-strongly convex $h$ with $\eta$-Lipschitz gradient, one can derive the estimates
    $$\frac{\sigma}{2}\|z-z'\|^2 \leq D_h(z,z') \leq \frac{\eta}{2}\|z-z'\|^2\quad\forall z,z'.$$
    If such inequalities hold on a local scale, then it would remain to be seen whether these coefficients can be estimated and used in the same way that $\lambda_k$ approximates an inverse of the local Lipschitz constant of $F$.
    \item In the context of the convex composite optimisation problems of the form $\min_{x\in \Hilbert}f(x) + g(x)$ where $g$ is nonsmooth and $f$ is smooth, the Bregman proximal gradient algorithm \cite{teboulle2018simplified, bauschke2017descent} is known to converge in a more general setting than Lipschitz continuity. Specifically, $L$-Lipschitz continuity of $\nabla f$ can be relaxed to convexity of the function $Lh-f$, which indeed holds if $\nabla f$ is Lipschitz and $h$ is strongly-convex. It would be interesting to see if a similar relaxation of Lipschitz continuous can be used in the context of the algorithms discussed here.
\end{itemize}

\paragraph{Acknowledgements.}
DJU and MKT are supported in part by Australian Research Council grant DE200100063.


\begin{thebibliography}{10}

\bibitem{adolphs2019local}
L.~Adolphs, H.~Daneshmand, A.~Lucchi, and T.~Hofmann.
\newblock Local saddle point optimization: A curvature exploitation approach.
\newblock In {\em The 22nd International Conference on Artificial Intelligence
  and Statistics}, pages 486--495. PMLR, 2019.

\bibitem{akimoto2021saddle}
Y.~Akimoto.
\newblock Saddle point optimization with approximate minimization oracle.
\newblock In {\em Proceedings of the Genetic and Evolutionary Computation
  Conference}, pages 493--501. ACM, 2021.

\bibitem{alacoglu2020convergence}
A.~Alacaoglu, Y.~Malitsky, and V.~Cevher.
\newblock Convergence of adaptive algorithms for weakly convex constrained
  optimization.
\newblock In {\em Advances in Neural Information Processing Systems}, 2021.

\bibitem{bauschke2019linear}
H.~H. Bauschke, J.~Bolte, J.~Chen, M.~Teboulle, and X.~Wang.
\newblock On linear convergence of non-euclidean gradient methods without
  strong convexity and {L}ipschitz gradient continuity.
\newblock {\em Journal of Optimization Theory and Applications}, 182, 09 2019.

\bibitem{bauschke2017descent}
H.~H. Bauschke, J.~Bolte, and M.~Teboulle.
\newblock A descent {L}emma beyond {L}ipschitz gradient continuity: First-order
  methods revisited and applications.
\newblock {\em Mathematics of Operations Research}, 42(2):330--348, 2017.

\bibitem{bauschke2003bregman}
H.~H. Bauschke, J.~M. Borwein, and P.~L. Combettes.
\newblock {B}regman monotone optimization algorithms.
\newblock {\em SIAM Journal on Control and Optimization}, 42(2):596--636, 2003.

\bibitem{bauschke1997legendre}
H.~H. Bauschke, J.~M. Borwein, et~al.
\newblock {L}egendre functions and the method of random {B}regman projections.
\newblock {\em Journal of Convex Analysis}, 4(1):27--67, 1997.

\bibitem{bauschke2011convex}
H.~H. Bauschke and P.~L. Combettes.
\newblock {\em Convex analysis and monotone operator theory in Hilbert spaces},
  volume 408.
\newblock Springer, New York, 2011.

\bibitem{bauschkedykstra}
H.~H. Bauschke and A.~S. Lewis.
\newblock Dykstra's algorithm with {B}regman projections: a convergence proof.
\newblock {\em Optimization}, 48:409--427, 2000.

\bibitem{BECK2003167}
A.~Beck and M.~Teboulle.
\newblock Mirror descent and nonlinear projected subgradient methods for convex
  optimization.
\newblock {\em Operations Research Letters}, 31(3):167--175, 2003.

\bibitem{borwein2011characterization}
J.~M. Borwein, S.~Reich, and S.~Sabach.
\newblock A characterization of {B}regman firmly nonexpansive operators using a
  new monotonicity concept.
\newblock {\em Journal of Nonlinear and Convex Analysis}, 12(1):161--184, 2011.

\bibitem{borwein2010convex}
J.~M. Borwein and J.~D. Vanderwerff.
\newblock {\em Convex functions: constructions, characterizations and
  counterexamples}, volume 172.
\newblock Cambridge University Press Cambridge, New York, 2010.

\bibitem{bravo2018bandit}
M.~Bravo, D.~Leslie, and P.~Mertikopoulos.
\newblock Bandit learning in concave {$N$}-person games.
\newblock In {\em 32nd Conference on Neural Information Processing Systems},
  Montr\'eal, 2018.

\bibitem{censor1998interior}
Y.~Censor, A.~Iusem, and S.~Zenios.
\newblock An interior point method with {B}regman functions for the variational
  inequality problem with paramonotone operators.
\newblock {\em Mathematical Programming}, 81:373--400, 05 1998.

\bibitem{Chen1993ConvergenceAO}
G.~Chen and M.~Teboulle.
\newblock Convergence analysis of a proximal-like minimization algorithm using
  {B}regman functions.
\newblock {\em SIAM Journal on Optimization}, 3(3):538--543, 1993.

\bibitem{chen2011projection}
Y.~Chen and X.~Ye.
\newblock Projection onto a simplex.
\newblock {\em arXiv:1101.6081}, 2011.

\bibitem{bellocruz2015variant}
J.~Y. Bello Cruz and R. D{\'i}az Mill{\'a}n.
\newblock A variant of forward-backward splitting method for the sum of two
  monotone operators with a new search strategy.
\newblock {\em Optimization}, 64(7):1471--1486, 2015.

\bibitem{dai2022distributed}
Y.~Dai and C.~Chen.
\newblock Distributed projections onto a simplex.
\newblock {\em arXiv:2204.08153}, 2022.

\bibitem{DURR2013540}
H.-B. Dürr, C.~Zeng, and C.~Ebenbauer.
\newblock Saddle point seeking for convex optimization problems.
\newblock In {\em 9th IFAC Symposium on Nonlinear Control Systems}, volume~46,
  pages 540--545, 2013.

\bibitem{fukishima1992equivalent}
M.~Fukushima.
\newblock Equivalent differentiable optimization problems and descent methods
  for asymmetric variational inequality.
\newblock {\em Mathematical Programming}, 53, 01 1992.

\bibitem{gibali2018new}
A.~Gibali.
\newblock A new {B}regman projection method for solving variational
  inequalities in {H}ilbert spaces.
\newblock {\em Pure and Applied Functional Analysis}, 3(3):403--415, 02 2018.

\bibitem{gibali2020fast}
A.~Gibali, L.~O. Jolaoso, O.~T. Mewomo, and A.~Taiwo.
\newblock Fast and simple {B}regman projection methods for solving variational
  inequalities and related problems in {B}anach spaces.
\newblock {\em Results in Mathematics}, 75(4):1--36, 2020.

\bibitem{hamedani2018optimization}
E.~Y. Hamedani and N.~S. Aybat.
\newblock A primal-dual algorithm with line search for general convex-concave
  saddle point problems.
\newblock {\em SIAM Journal on Optimization}, 31(2):1299--1329, 2021.

\bibitem{vanhieu2022modified}
D.~V. Hieu and P.~Cholamjiak.
\newblock Modified extragradient method with {B}regman distance for variational
  inequalities.
\newblock {\em Applicable Analysis}, 101(2):655--670, 2022.

\bibitem{vanhieu2020two}
D.~V. Hieu and S.~Reich.
\newblock Two {B}regman projection methods for solving variational
  inequalities.
\newblock {\em Optimization}, 71(7):1777--1802, 2022.

\bibitem{iusem1997variant}
A.~N. Iusem and B.~F. Svaiter.
\newblock A variant of {K}orpelevich’s method for variational inequalities
  with a new search strategy.
\newblock {\em Optimization}, 42(4):309--321, 1997.

\bibitem{jolaoso2020weak}
L.~O. Jolaoso and M.~Aphane.
\newblock Weak and strong convergence {B}regman extragradient schemes for
  solving pseudo-monotone and non-{L}ipschitz variational inequalities.
\newblock {\em Journal of Inequalities and Applications}, 2020(1):1--25, 2020.

\bibitem{sym12122007}
L.~O. Jolaoso, M.~Aphane, and S.~H. Khan.
\newblock Two {B}regman projection methods for solving variational inequality
  problems in {H}ilbert spaces with applications to signal processing.
\newblock {\em Symmetry}, 12(12), 2020.

\bibitem{jolaso2021single}
L.~O. Jolaoso and Y.~Shehu.
\newblock Single {B}regman projection method for solving variational
  inequalities in reflexive {B}anach spaces.
\newblock {\em Applicable Analysis}, pages 1--22, 2021.

\bibitem{kakade2009duality}
S.~Kakade, S.~Shalev-Shwartz, and A.~Tewari.
\newblock On the duality of strong convexity and strong smoothness: Learning
  applications and matrix regularization.
\newblock Technical report, Toyota Technological Institute, 2009.

\bibitem{korpelevich1976extragradient}
G.~M. Korpelevi\v{c}.
\newblock The extragradient method for finding saddle points and other
  problems.
\newblock {\em Ekonomika i Matematcheskie Metody}, 12:747--756, 1976.

\bibitem{NIPS2015_f60bb6bb}
W.~Krichene, A.~Bayen, and P.~L. Bartlett.
\newblock Accelerated mirror descent in continuous and discrete time.
\newblock In C.~Cortes, N.~Lawrence, D.~Lee, M.~Sugiyama, and R.~Garnett,
  editors, {\em Advances in Neural Information Processing Systems}, volume~28.
  Curran Associates, Inc., 2015.

\bibitem{laude2020bregman}
E.~Laude, P.~Ochs, and D.~Cremers.
\newblock {B}regman proximal mappings and {B}regman--{M}oreau envelopes under
  relative prox-regularity.
\newblock {\em Journal of Optimization Theory and Applications},
  184(3):724--761, 2020.

\bibitem{lyashko2011low}
S.~Lyashko, V.~Semenov, and T.~Voitova.
\newblock Low-cost modification of {K}orpelevich’s methods for monotone
  equilibrium problems.
\newblock {\em Cybernetics and Systems Analysis}, 47(4):631--639, 2011.

\bibitem{thomas2006elements}
T.~M~Cover and J.~A~Thomas.
\newblock {\em Elements of information theory}.
\newblock Wiley-Interscience, 2006.

\bibitem{malitsky2020projected}
Y.~Malitsky.
\newblock Projected reflected gradient methods for monotone variational
  inequalities.
\newblock {\em SIAM Journal on Optimization}, 25:502--520, 03 2015.

\bibitem{malitsky2018proximal}
Y.~Malitsky.
\newblock Proximal extrapolated gradient methods for variational inequalities.
\newblock {\em Optimization Methods and Software}, 33(1):140--164, 2018.
\newblock PMID: 29348705.

\bibitem{malitsky2020golden}
Y.~Malitsky.
\newblock Golden ratio algorithms for variational inequalities.
\newblock {\em Mathematical Programming}, 184(1):383--410, 2020.

\bibitem{malitsky2020adaptive}
Y.~Malitsky and K.~Mishchenko.
\newblock Adaptive gradient descent without descent.
\newblock In {\em Proceedings of the 37th International Conference on Machine
  Learning}. JMLR.org, 2020.

\bibitem{malitsky2020forward}
Y.~Malitsky and M.~K. Tam.
\newblock A forward-backward splitting method for monotone inclusions without
  cocoercivity.
\newblock {\em SIAM Journal on Optimization}, 30(2):1451--1472, 2020.

\bibitem{marcotee1995projection}
P.~Marcotte and J.~H. Wu.
\newblock On the convergence of projection methods: Application to the
  decomposition of affine variational inequalities.
\newblock {\em Journal of Optimization Theory and Applications},
  85(2):347–362, 1995.

\bibitem{nesterov2007dual}
Y.~Nesterov.
\newblock Dual extrapolation and its applications to solving variational
  inequalities and related problems.
\newblock {\em Mathematical Programming}, 109(2):319--344, 2007.

\bibitem{nomirovskii2019convergence}
D.~Nomirovskii, B.~Rublyov, and V.~Semenov.
\newblock Convergence of two-stage method with {B}regman divergence for solving
  variational inequalities.
\newblock {\em Cybernetics and Systems Analysis}, 55(3):359--368, 2019.

\bibitem{Pang1982IterativeMF}
J.~S. Pang and D.~Chan.
\newblock Iterative methods for variational and complementarity problems.
\newblock {\em Mathematical Programming}, 24:284--313, 1982.

\bibitem{popov1980modification}
L.~D. Popov.
\newblock A modification of the {A}rrow--{H}urwicz method for search of saddle
  points.
\newblock {\em Mathematical notes of the Academy of Sciences of the USSR},
  28(5):845--848, 1980.

\bibitem{teboulle2018simplified}
M.~Teboulle.
\newblock A simplified view of first order methods for optimization.
\newblock {\em Mathematical Programming}, 170(1):67--96, 2018.

\bibitem{wang2012projection}
W.~Wang and M.~{\'A}. Carreira-Perpi{\~n}{\'a}n.
\newblock Projection onto the probability simplex: An efficient algorithm with
  a simple proof, and an application.
\newblock {\em arXiv:1309.1541}, 2013.

\end{thebibliography}


\end{document}